\author{Noam Kimmel}
\address{N. Kimmel: Raymond and Beverly Sackler School of Mathematical Sciences, Tel Aviv University, Tel
Aviv 69978, Israel.}
\email{\href{mailto:noamkimmel@mail.tau.ac.il}{noamkimmel@mail.tau.ac.il}}
\title{Asymptotic zeros of Poincar\'e series}
\thanks{This research was supported by the European Research Council (ERC) under the European Union's  Horizon 2020 research and innovation program  (Grant agreement No.    786758).}
\begin{document}

\maketitle

\begin{abstract}
We study the zeros of Poincar\'e series $P_{k,m}$ for the full modular group.
We consider the case where $m \sim \alpha k$ for some constant $\alpha > 0$. 
We show that in this case a positive proportion of the zeros lie on the line $\frac{1}{2} + it$.
We further show that if $\alpha > \frac{\log(2)}{2\pi}$ then the imaginary axis also contains a positive proportion of zeros.
We also give a description for the location of the non-real zeros when $\alpha$ is small. 
\end{abstract}

\setcounter{tocdepth}{1}
\tableofcontents

\section{Introduction}
\subsection{Zeros of modular forms}
For an even integer $k\geq 0$, we denote by $M_k$ the space of modular forms of weight $k$ for the full modular group $\text{SL}(2,\ZZ)$.
The Fourier expansion of $f\in M_k$ is given by
$$
f(z) = \sum_{n\geq 0}a_f(n) q^n
$$
with $q = e^{2\pi i z}$, $z\in \HH = \SET{z\;:\; \Im{z} > 0}$.
One defines $v_\infty(f)$ as the smallest $n$ such that $a_f(n) \neq 0$.
Forms with $v_\infty(f) > 0$ form a subspace of $M_k$ called the space of cusp forms, and denoted by $S_k$.
The space $M_k$ is the direct sum of $S_k$ and the one dimensional space spanned by the Eisenstein series of weight $k$:
$$
E_k(z) = \frac{1}{2}\sum_{\substack{c,d\in \ZZ \\ \gcd(c,d) = 1}}\frac{1}{(cz + d)^k}.
$$
Writing $k = 12 \ell + k'$ with $k'\in\SET{0,4,6,8,10,14}$, it is known that $M_k$ is of dimension $\ell + 1$.

A form $f\in M_k$ has $\frac{k}{12} + \BigO{1}$ zeros in the fundamental domain
\begin{multline*} 
\mathcal{F} = 
\SET{z\in \HH \; : \; |z| > 1, \Re{z} \in \left(-\frac{1}{2},\frac{1}{2}\right]} \\
\bigcup 
\SET{z\in \HH \; : \; |z| = 1, \arg{z} \in \left[\frac{\pi }{3}, \frac{\pi}{2}\right]}
\end{multline*}
(see \autoref{fig-fund_domain}).
More precisely, a nonzero form $f\in M_k$ satisfies the valence formula:
\begin{equation*}
v_\infty(f) + \frac{1}{2}v_i(f) + \frac{1}{3}v_\rho(f) + 
\sum_{\substack{p\in \mathcal{F} \\ p\neq i,\rho}} v_p(f)
= \frac{k}{12}
\end{equation*}
where $\rho = \frac{1}{2} + \frac{\sqrt{3}}{2}i$, and $v_p(f)$ is the order of vanishing of $f$ at $p$.

We define the geodesic segments
\begin{equation*}
\mathcal{A} = \SET{e^{i\theta} \;\middle|\; \theta \in \left[\frac{\pi}{3},\frac{\pi}{2}\right]} , \;
\mathcal{L}_\rho = \SET{\frac{1}{2} + ti \;\middle| \; t\geq \frac{\sqrt{3}}{2}} ,\;
\mathcal{L}_i = \SET{ti \;\middle| \; t\geq 1},
\end{equation*}
as are shown in \autoref{fig-fund_domain}.
\begin{figure}[ht]
\centering
\includegraphics[trim=0 3cm 0 3cm, clip, width=1\textwidth]{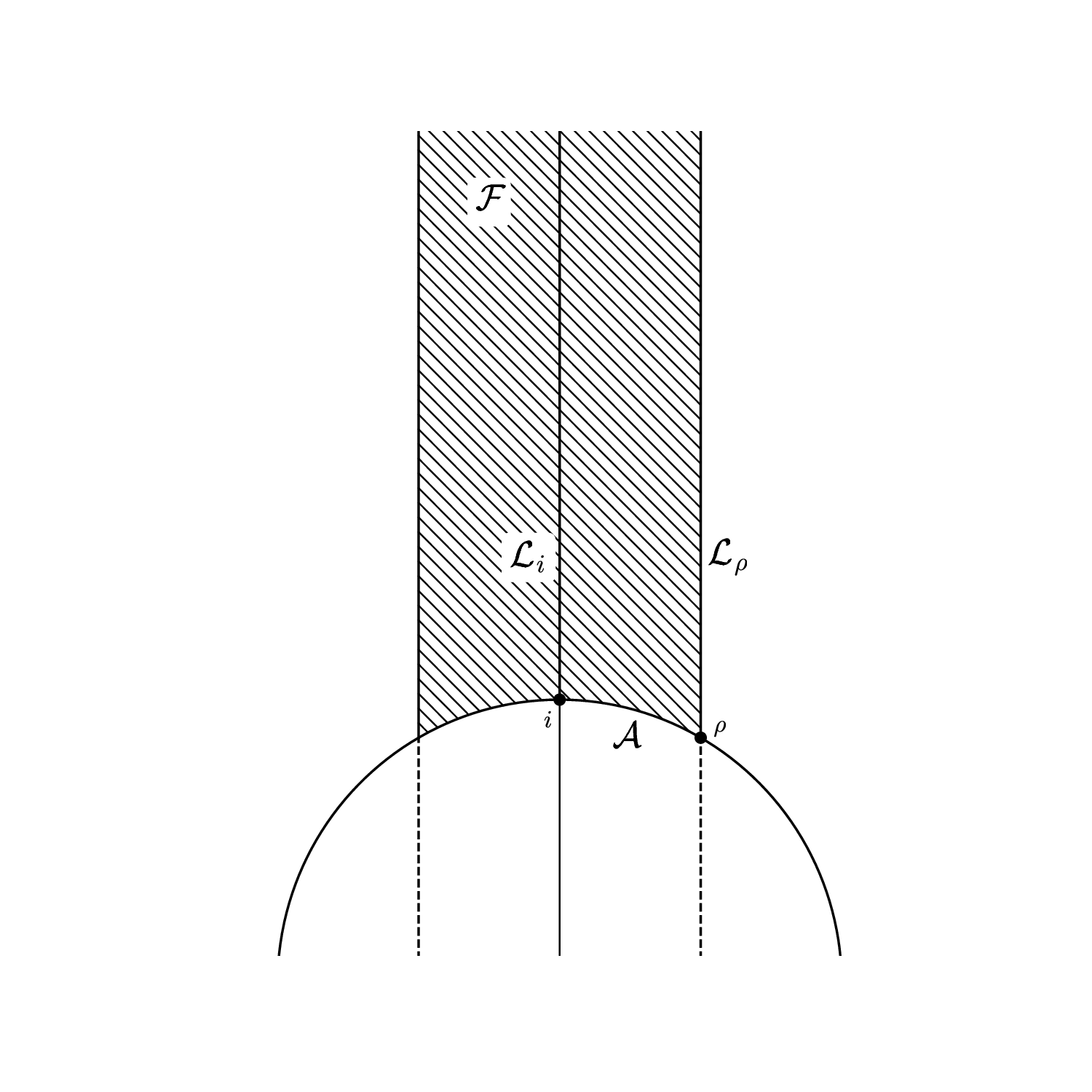}
\caption{The fundamental domain $\mathcal{F}$, and the three segments $\mathcal{L}_i$, $\mathcal{L}_\rho$, and $\mathcal{A}$.}\label{fig-fund_domain}
\end{figure}
Following the terminology of \cite{MR2881302}, we call zeros of a form $f\in M_k$ lying on these segments 'real' zeros.
Note that the $j$-invariant takes real values on these segments.
Also, if the $a_f(n)$'s are all real, then $f(z)$ takes real values on $\mathcal{L}_\rho, \mathcal{L}_i$, and $z^{k/2}f(z)$ takes real values on $\mathcal{A}$.

There have been numerous results regarding the distribution of zeros, and of real zeros, for various modular forms of interest.
If $f\in M_k$ is a Hecke eigenform, a consequences of the holomorphic QUE conjecture proved by Holowinsky and Soundararajan \cite{MR2680499} is that the zeros of $f$ equidistribute in $\mathcal{F}$ with respect to the hyperbolic measure as $k\rightarrow\infty$.
This consequence was proved by Rudnick in \cite{MR2181743}.
It was also proved by Ghosh and Sarnak in \cite{MR2881302} that the number of real zeros of such forms tends to infinity with $k$.

The situation for the Eisenstein series is quite different. 
It was shown by Rankin and Swinnerton-Dyer \cite{MR0664646} that all zeros of $E_k$ in $\mathcal{F}$ lie on the arc $\mathcal{A}$.
There have been many generalizations of this result to other modular forms and functions of interest.
For example, in \cite{MR2441704} Duke and Jenkins extended this result to certain types of "gap forms".
In \cite{MR2302560} Hahn considered Eisenstein series associated with other Fuchsian groups. 
In \cite{vanittersum2023zeros} Ittersum and Ringeling considered odd weight Eisenstein series.
In \cite{MR3706602} and \cite{zbMATH07399193} zeros of certain combinations of Eisenstein series were considered.
In \cite{MR2265867} Gun considered the zeros of certain linear combinations of Poincar\'e series.
In \cite{MR2409175} Nozaki proved an interlacing property between the zeros of $E_{k+12}$ and those of $E_k$, which was expanded on in \cite{MR4323910}.
This interlacing property was also studied for other modular forms of interest in \cite{MR3211795}, \cite{MR4613381}, \cite{MR2929008}.

\subsection{Main results}
In this paper we consider the zeros of Poincar\'e series.
Denote by $P_{k,m}(z)$ the Poincar\'e series 
$$
P_{k,m}(z) = \frac{1}{2}
\sum_{\substack{c,d\in\ZZ \\ \gcd(c,d) = 1}}
\frac{\exp{2\pi i m \gamma z}}{(cz + d)^k} \in M_k
$$
where $\gamma \in \text{SL}(2,\ZZ)$ is any matrix of the form 
$
\gamma = 
\begin{pmatrix}
    a       & b  \\
    c       & d 
\end{pmatrix}.
$
For $m = 0$ we have that $P_{k,0} = E_k$ is the Eisenstein series.
For $m\geq 1$ it is known that $P_{k,m}$ is a cusp form.
It is also known that $P_{k,m}$ with $0\leq m \leq \ell$ span $M_k$.

In \cite{MR0664646} Rankin showed that $P_{k,m}$ has at least $\frac{k}{12} - m$ zeros on $\mathcal{A}$.
So if $m=o(k)$, this proves that $100\%$ of the zeros of $P_{k,m}$ lie on $\mathcal{A}$ as $k$ goes to infinity.
However, for larger values of $m$, this still leaves a significant proportion of zeros unaccounted for, and for $m\geq \frac{k}{12}$ this does not give any information regarding the zeros of $P_{k,m}$.

\begin{figure}[ht]
\centering
\includegraphics[trim=4cm 2cm 4cm 0cm, clip, width=1\textwidth]{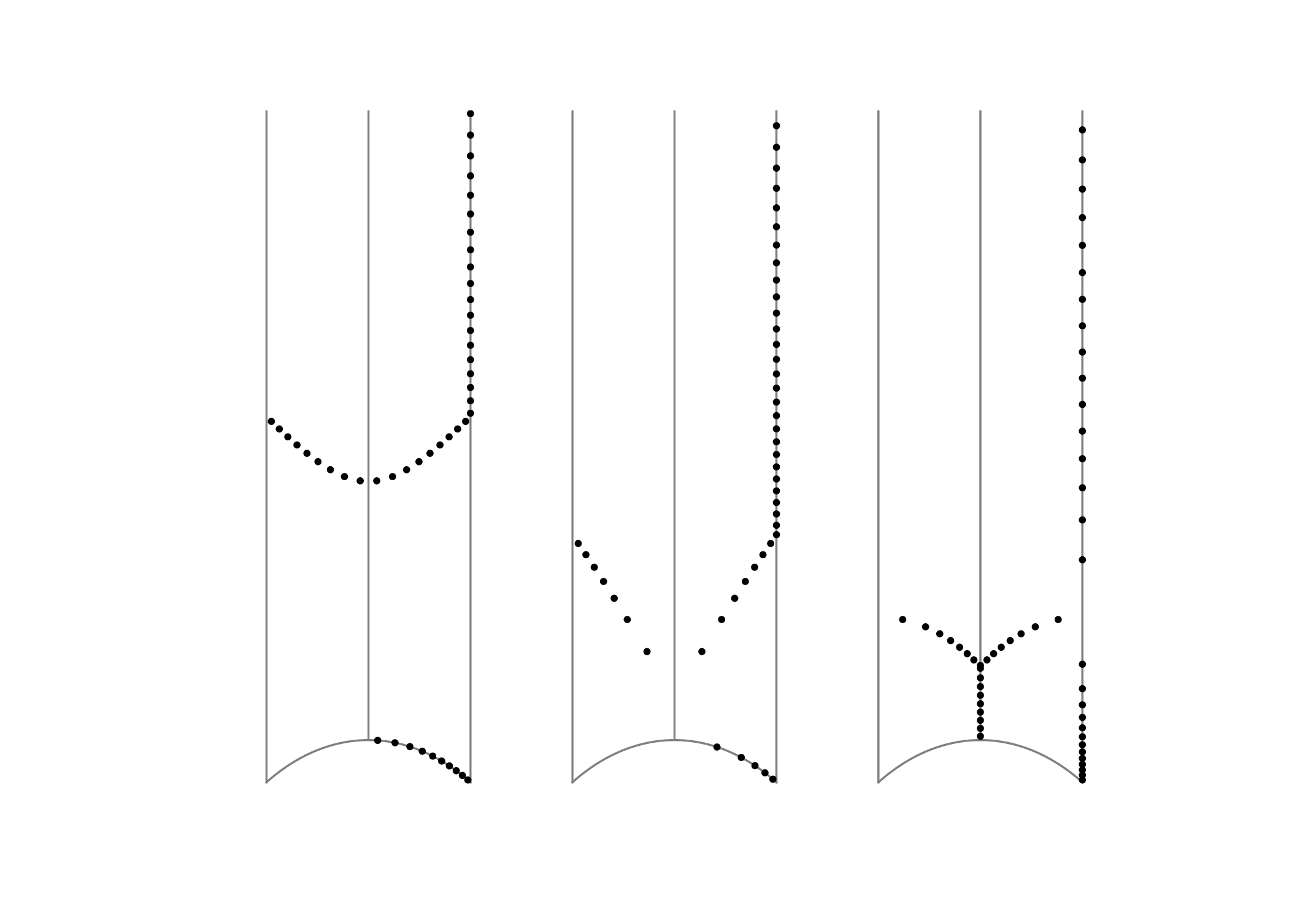}
\caption{From left to right, the zeros in $\mathcal{F}$ of $P_{1200,90}$, $P_{1200,95}$, $P_{1200,150}$ with $\Im{ z} < 3$.}
\label{fig-zeros}
\end{figure}

In this paper, we explore the zeros of $P_{k,m}$ where $m = \alpha k + o(k)$ for some constant $\alpha>0$, as $k$ goes to infinity.
In \autoref{fig-zeros} we show a plot of the zeros of $P_{1200,m}$ for various $m$'s.
In these images we see that there are many zeros on $\mathcal{L}_\rho$, and in some cases many lie on $\mathcal{L}_i$ as well.
The non-real zeros also seem to follow a very neat pattern.
We aim to explain these phenomena in this paper.
Specifically, we prove the following theorems.

\begin{theorem}\label{thm-lrho}
Assume $m = \alpha k + o(k)$ for some $\alpha > 0$.
Then a positive proportion of the zeros of $P_{k,m}$ in $\mathcal{F}$ lie on $\mathcal{L}_\rho$.
That is, there exists $P_\rho(\alpha) > 0$ such that the number of zeros of $P_{k,m}$ on $\mathcal{L}_\rho$ is at least $P_\rho(\alpha) \frac{k}{12} + o(k)$.
\end{theorem}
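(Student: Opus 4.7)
Since the Fourier coefficients of $P_{k,m}$ are real and $e^{2\pi i n(1/2+it)}=(-1)^n e^{-2\pi nt}$, the function $P_{k,m}(\tfrac12+it)$ is real-valued for $t>0$. The plan is to count its sign changes in an interval $[t_0,T]\subset(\sqrt 3/2,\infty)$ depending only on $\alpha$, obtaining $\Omega_\alpha(k)$ of them, which will give the desired positive proportion.

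The first step is to isolate an oscillating main term. With $z=1/2+it$ and $|z|^2=1/4+t^2$, the coprime pair $(c,d)=(0,\pm 1)$ contributes $(-1)^m e^{-2\pi mt}$, while $(\pm 1,0),(\pm 1,\mp 1)$ give complex-conjugate contributions that sum (using evenness of $k$) to $2|z|^{-k}e^{-2\pi mt/|z|^2}\cos\bigl(k\arctan(2t)+\pi m/|z|^2\bigr)$. I decompose $P_{k,m}(\tfrac12+it)=M(t)+E(t)$ with $M(t)$ the sum of these two and $E(t)$ the rest. Setting $u=|cz+d|^2$ and $g(u)=\tfrac12\log u+2\pi\alpha t/u$, each Poincar\'e term has magnitude $\exp(-kg(u)+o(k))$ (using $m=\alpha k+o(k)$); $g$ has its unique minimum at $u^*=4\pi\alpha t$ and is strictly increasing on $[u^*,\infty)$.

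Next I select the interval. Let $h(t)=-\tfrac12\log(1/4+t^2)+2\pi\alpha t(t^2-3/4)/(t^2+1/4)$, so that the cosine amplitude in $M(t)$ exceeds $e^{-2\pi mt}$ by a factor $2\exp(kh(t)+o(k))$. Since $h(\sqrt 3/2)=0$ and $h(t)\to\infty$, some interval $[t_0,T]$ satisfies $h\ge h_0>0$. Enlarging $t_0$ further so that $t_0>2\pi\alpha+\sqrt{\max(0,4\pi^2\alpha^2-1/4)}$ places the main pair past $u^*$, i.e.\ $u^*\le 1/4+t^2$ throughout $[t_0,T]$. The phase $\Phi(t)=k\arctan(2t)+\pi m/(1/4+t^2)$ then satisfies $\Phi'(t)\ge c_\alpha k>0$ on $[t_0,T]$, so $\cos\Phi$ has $\Omega_\alpha(k)$ sign changes. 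For the error, every other coprime pair satisfies $|cz+d|^2\ge\min(9/4+t^2,\,4t^2)>1/4+t^2$, so by monotonicity of $g$ on $[u^*,\infty)$ one has $g(|cz+d|^2)\ge g(1/4+t^2)+\delta_\alpha$ for some $\delta_\alpha>0$; summing via the standard $O(\sqrt{U})$ count of $\{(c,d):|cz+d|^2\le U\}$ and a Laplace-type tail estimate yields $|E(t)|\le e^{-\delta_\alpha k/2}\cdot 2|z|^{-k}e^{-2\pi mt/|z|^2}$ uniformly on $[t_0,T]$. Consequently, $P_{k,m}(\tfrac12+it)$ inherits the $\Omega_\alpha(k)$ sign changes of $\cos\Phi$ at its peaks, producing a positive proportion of zeros on $\mathcal{L}_\rho$ among the $k/12+O(1)$ total.

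The main obstacle is the uniform error bound, which is precisely why $t_0$ must be chosen past the minimum of $g$: this keeps the closest ``other'' pairs $(1,\pm 1),(1,-2),(2,\pm 1),\ldots$ on the monotone-increasing side of $g$, so that exponential decay in $k$ dominates the lattice-point growth and the tail is geometric. For small $\alpha$ the required $t_0$ is large, which makes $P_\rho(\alpha)$ small but positive; for large $\alpha$ one has $t_0\asymp\alpha$, again yielding a positive but small density. A refinement of the main term (incorporating $(1,\pm 1),(2,\pm 1),\ldots$) could presumably give a better constant, but the two-pair version above already suffices for the positivity assertion of the theorem.
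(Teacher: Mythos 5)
Your argument is correct and is essentially the paper's own proof: the same oscillating main term $\sigma_{1,0}+\sigma_{1,-1}$ on $\mathcal{L}_\rho$, the same comparison showing that $e^{-2\pi m t}$ and all remaining $\sigma_{c,d}$ are exponentially smaller on a fixed $t$-interval (your condition $h(t)\ge h_0>0$ is precisely the paper's $t>F^{-1}(\alpha)$, and your monotone-$g$ device past $u^*=4\pi\alpha t$ plays the role of the paper's Proposition on $r_{c,d}<r_{1,0}$), followed by counting sign changes of the phase $k\arctan(2t)+\pi m/(1/4+t^2)$. The only differences are quantitative rather than structural --- the paper works down to $F^{-1}(\alpha)$ (resp.\ $\sqrt{3}/2$) and uses the total variation of the phase to extract an explicit $P_\rho(\alpha)$, whereas you restrict past the stationary point $t_{\min}$ and settle for positivity; also the lattice count $\#\{(c,d):|cz+d|^2\le U\}$ is $O(U)$ rather than $O(\sqrt{U})$, a slip that does not affect your tail estimate since the exponential gain $e^{-\delta_\alpha k}$ dominates either way.
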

\begin{remark}
For $\alpha \leq \frac{\sqrt{3}\log(3)}{4\pi}$ we give an explicit formula for a constant $P_\rho(\alpha)$.
That is, for $\alpha \leq \frac{\sqrt{3}\log(3)}{4\pi}$ we show that we can set
\begin{equation*}
P_\rho(\alpha) =  \begin{cases}
    12\left(\frac{1}{2} - \frac{ \alpha}{\frac{1}{4} + F^{-1}(\alpha)^2} - \frac{1}{\pi}\arctan{2F^{-1}(\alpha)}\right) & \alpha < \frac{1}{2\pi\sqrt{3}} \\
    12\left(\frac{5}{6} + \alpha
    - \frac{2\alpha}{\frac{1}{4} + t_\text{min}^2}
    - \frac{2}{\pi}\arctan{2t_\text{min}}\right)
     & \frac{1}{2\pi\sqrt{3}} \leq \alpha \leq \frac{\sqrt{3}\log(3)}{4\pi}
\end{cases}
\end{equation*}
where 
$$
F(t) = 
\frac{1}{4\pi}
\frac{\qpt\log\qpt}{t\left(t^2 - \frac{3}{4}\right)}
$$
and 
$$
t_\text{min} = 2\pi\alpha + \sqrt{4\pi^2\alpha^2 - \frac{1}{4}}.
$$
See \autoref{fig-palpha} for a plot of these $P_\rho(\alpha)$.

In this range of $0<\alpha \leq \frac{\sqrt{3}\log(3)}{4\pi}$, we believe the constants $P_\rho(\alpha)$ we attain are sharp.
For $\alpha < \frac{1}{4\pi}$ we prove that this is in fact true (this is shown in the proof of \autoref{thm-non_real}).
That is, for $\alpha < \frac{1}{4\pi}$ we show that the number of zeros of $P_{k,m}$ that lie on $\mathcal{L}_\rho$ is equal $P_{\rho}(\alpha) \frac{k}{12} + o(k)$ for the constant $P_\rho(\alpha)$ above.
For $\alpha > \frac{\sqrt{3}\log(3)}{4\pi}$ we only show $P_\rho(\alpha) > 0$ which simplifies calculations.
\end{remark}

\begin{theorem}\label{thm-li}
Assume $m = \alpha k + o(k)$ for some $\alpha > \frac{\log(2)}{2\pi}$.
Then a positive proportion of the zeros of $P_{k,m}$ in $\mathcal{F}$ lie on $\mathcal{L}_i$.
That is, there exists $P_i(\alpha) > 0$ such that the number of zeros of $P_{k,m}$ on $\mathcal{L}_i$ is at least $P_i(\alpha) \frac{k}{12} + o(k)$.
\end{theorem}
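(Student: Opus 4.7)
The strategy mirrors the proof of Theorem~\ref{thm-lrho}, now carried out on $\mathcal{L}_i$. Since the Fourier coefficients of $P_{k,m}$ are real, $P_{k,m}(it)$ takes real values, and zeros on $\mathcal{L}_i$ correspond to sign changes of $t \mapsto P_{k,m}(it)$ on $[1,\infty)$. Decomposing the series by the bottom row $(c,d)$ of coset representatives in $\Gamma_\infty \backslash \Gamma$, the $(0,1)$ term contributes $e^{-2\pi m t}$, the $(1,0)$ term contributes $(-i)^k t^{-k} e^{-2\pi m/t}$, and for each integer $n \geq 1$ the pair $(1,\pm n)$ combines (using $e^{2\pi i m a} = 1$ for integer $a$) into the real oscillating expression
\[
\psi_n(t) \;=\; \frac{2\,e^{-2\pi m t/(n^2+t^2)}}{(n^2+t^2)^{k/2}}\cos\!\left(k\arctan(t/n) + \frac{2\pi m n}{n^2+t^2}\right).
\]
The plan is to isolate an interval $[1,T]$ on which $\psi_1$ dominates every other contribution, and to produce the claimed zeros of $P_{k,m}(it)$ from the sign changes of the cosine factor of $\psi_1$.

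The hypothesis $\alpha > \log(2)/(2\pi)$ enters at precisely this step. Writing each term's magnitude as $e^{-k f_{c,d}(t)}$ with
\[
f_{c,d}(t) \;=\; \tfrac{1}{2}\log(c^2 t^2 + d^2) + \frac{2\pi\alpha t}{c^2 t^2 + d^2},
\]
one has $f_{1,\pm 1}(1) = \tfrac{1}{2}\log 2 + \pi\alpha$ while $f_{0,1}(1) = f_{1,0}(1) = 2\pi\alpha$; so $\psi_1$ beats both of the \emph{trivial} terms at $t=1$ exactly when $\alpha > \log(2)/(2\pi)$. A short monotonicity computation propagates the strict inequality to an interval $[1,T(\alpha)]$, with $T(\alpha)$ cut off by the next crossover of $f_{1,\pm 1}$ --- with $f_{1,0}$ for $\alpha$ close to the threshold, or with $f_{1,\pm n}$ for $n \geq 2$ once $\alpha$ is larger. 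On $[1,T]$ the argument $\Phi(t) = k\arctan t + 2\pi m/(1+t^2)$ of the cosine in $\psi_1$ changes by a quantity linear in $k$ (since $m = \alpha k + o(k)$), yielding $\Omega(k)$ sign changes of $\cos\Phi$ and hence the required constant $P_i(\alpha) > 0$.

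The main obstacle is the error estimate. At each sample point $t_j \in [1,T]$ with $|\cos\Phi(t_j)| = 1$, the combined contribution of all $(c,d)$ outside $\{(0,\pm 1),(\pm 1,0),(\pm 1,\pm 1)\}$ must be strictly smaller in absolute value than the amplitude $2(1+t_j^2)^{-k/2} e^{-2\pi m t_j/(1+t_j^2)}$ of $\psi_1$. Grouping the remainder by $c^2+d^2$ and exploiting the exponential gap $e^{-k(f_{c,d}-f_{1,\pm 1})}$, this reduces to an elementary but delicate case check for the first few competitors (chiefly $(1,\pm 2)$ and $(2,\pm 1)$) together with a geometric tail bound for the rest. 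This comparison is what pins down $T(\alpha)$, and once it is in place every subinterval of $[1,T]$ bounded by consecutive sample points of opposite sign for $\psi_1$ contains a genuine zero of $P_{k,m}(it)$, giving the claimed bound.
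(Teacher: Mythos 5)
Your outline is the same as the paper's: use that $P_{k,m}(it)$ is real, identify the pair $(1,\pm 1)$ as the dominant oscillating contribution on some $t$-interval, control the tail (the paper does this once and for all in \autoref{lem-main}), and count sign changes of the cosine factor. The formula for $\psi_n$ and the computation showing that $\alpha>\frac{\log 2}{2\pi}$ is exactly the condition for $f_{1,\pm1}(1)<f_{0,1}(1)=f_{1,0}(1)$ are both correct.

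The gap is in the claim that the dominance interval can be taken of the form $[1,T(\alpha)]$, with the competitors $(1,\pm2),(2,\pm1)$ only ``pinning down'' the right endpoint $T(\alpha)$. At $t=1$ one has $f_{1,\pm1}(1)=\tfrac12\log 2+\pi\alpha$ and $f_{1,\pm2}(1)=f_{2,\pm1}(1)=\tfrac12\log 5+\tfrac{2\pi\alpha}{5}$, so $(1,\pm1)$ beats these competitors at $t=1$ only when $\alpha<\frac{5\log(5/2)}{6\pi}\approx 0.243$; for larger $\alpha$ the pairs with $c^2+d^2$ closest to $4\pi\alpha$ dominate at $t=1$ and your interval $[1,T(\alpha)]$ is empty, so the argument as written proves the theorem only for $\alpha$ in a window just above the threshold, not for all $\alpha>\frac{\log 2}{2\pi}$. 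The repair is available within your own framework and is what the paper does: writing $\log r_{c,d}=I(w_{c,d})-\tfrac12\log y$ with $I(w)=-2\pi\alpha w+\tfrac12\log w$, which is uniquely maximized at $w=(4\pi\alpha)^{-1}$, one observes that for $\alpha>\frac{1}{2\pi}$ there is a point $t_0>1$ with $w_{1,\pm1}(it_0)=\frac{t_0}{t_0^2+1}=(4\pi\alpha)^{-1}$, and no other $(c,d)$ attains this value of $w$ there; hence $r_{1,\pm1}$ is strictly maximal on a neighborhood of $t_0$, and the sign-change count is run on that neighborhood instead of near $t=1$. The case $\frac{\log 2}{2\pi}<\alpha\le\frac{1}{2\pi}$ is then handled at $t=1$ as you propose, where the monotonicity of $I$ on $\bigl(0,(4\pi\alpha)^{-1}\bigr]$ disposes of all $c^2+d^2\ge 2$ at once and only the comparison with $c^2+d^2=1$ remains. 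Without some such relocation of the interval for large $\alpha$, your proof does not establish the theorem in the stated generality.
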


\begin{theorem}\label{thm-non_real}
Assume $m = \alpha k + o(k)$ for some $0<\alpha < \frac{1}{4\pi}$.
Let $\Gamma_\alpha\subset \mathcal{F}$ be the curve defined by the equation 
$$
2\pi\alpha y = \frac{\log(|z|)}{1 - |z|^{-2}}
$$
where $z = x + iy$, $|z| > 1$ (see \autoref{fig-gamma_alpha}).
Let $\varepsilon>0$, and denote by $N_{\Gamma_\alpha, \varepsilon}$ the number of non-real zeros of $P_{k,m}$ in $\mathcal{F}$ whose distance from $\Gamma_\alpha$ is at most $\varepsilon$.
Then we have $N_{\Gamma_\alpha, \varepsilon} > C(\alpha) k + o(k)$ for some positive constant $C(\alpha)$.
Furthermore, these zeros account for $100\%$ of the zeros not on $\mathcal{A}$, $\mathcal{L}_\rho$.
That is, all but at most $o(k)$ of the zeros of $P_{k,m}$ in $\mathcal{F}$ not on $\mathcal{A}$, $\mathcal{L}_\rho$ are non-real zeros whose distance to $\Gamma_\alpha$ is at most $\varepsilon$.
\end{theorem}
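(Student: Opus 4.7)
The plan is to show that for $\alpha<\tfrac{1}{4\pi}$ the Poincar\'e series is dominated, on every compact $K\subset\mathcal F^{\circ}$ disjoint from $\mathcal L_\rho$, by the two cosets giving
\begin{equation*}
T_0(z) = e^{2\pi i m z}\quad\text{(from $(c,d)=(0,\pm 1)$),}\qquad
T_1(z) = z^{-k}e^{-2\pi i m/z}\quad\text{(from $(c,d)=(\pm 1,0)$),}
\end{equation*}
and then to locate and count the zeros of $P_{k,m}$ by analysing those of $T_0+T_1$. Writing $P_{k,m}=T_0+T_1+R$, each term in $R$ has log-modulus of the form $k\,f(|cz+d|)+o(k)$, where $f(r)=-2\pi\alpha y/r^2-\log r$ is strictly decreasing for $r\geq r_{*}:=2\sqrt{\pi\alpha y}$. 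Elementary inequalities give $|z|^2/y\geq 1$ on $\mathcal F$, so the hypothesis $\alpha<\tfrac{1}{4\pi}$ forces $|z|>r_{*}$. Combined with the bound $|cz+d|\geq|z|+\eta_K$ valid uniformly for every $(c,d)\notin\{(0,\pm 1),(\pm 1,0)\}$ on any compact set $K$ with $|\Re z|$ bounded away from $1/2$, this yields $|R(z)|\leq e^{-\delta k}\max(|T_0(z)|,|T_1(z)|)$ on $K$ for some $\delta=\delta(K,\alpha)>0$.

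\emph{Zeros of the main part.} The equation $T_0+T_1=0$ splits into the modulus condition $|T_0|=|T_1|$, which rearranges exactly to $2\pi(m/k)y=\log|z|/(1-|z|^{-2})$ (i.e.\ $\Gamma_\alpha$ up to the $o(1)$-perturbation from $m-\alpha k=o(k)$), and the phase condition
\begin{equation*}
\Phi_k(z) := k\arg z + 2\pi m\,x\bigl(1+|z|^{-2}\bigr) \equiv \pi \pmod{2\pi}.
\end{equation*}
Parametrising $\Gamma_\alpha\cap\mathcal F$ by $r=|z|$ (the curve runs from $\mathcal L_i$ at $x=0$ out to $\mathcal L_\rho$ at $x=1/2$), a short calculation shows $\Phi_k$ is monotone in $r$ with total variation of order $k$, so the number of solutions is $C(\alpha)k+O(1)$ for an explicit positive constant $C(\alpha)$. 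At each solution $z_0$ the derivative of $T_0+T_1$ has modulus $\asymp k(|T_0(z_0)|+|T_1(z_0)|)$, so Rouch\'e on a disc of radius $e^{-\delta k/3}/k$ around $z_0$ produces a unique zero of $P_{k,m}$, giving at least $C(\alpha)k+o(k)$ zeros within the $\varepsilon$-neighbourhood of $\Gamma_\alpha$ once $k$ is large. Conversely, any $z\in K$ at distance $>\varepsilon$ from $\Gamma_\alpha$ has $|T_0/T_1|$ or its reciprocal exponentially large in $k$, so $|T_0+T_1|\gg|R|$ and $P_{k,m}(z)\ne 0$.

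\emph{Completeness and main obstacle.} Combining the lower bound just obtained with Rankin's count of at least $k/12-m$ zeros on $\mathcal A$ and the sharp count $P_\rho(\alpha)\,k/12+o(k)$ on $\mathcal L_\rho$ coming from (the proof of) \autoref{thm-lrho}, the valence formula $\#\{\text{zeros}\}=\tfrac{k}{12}+O(1)$ leaves no room for additional contributions of positive proportion, so every zero not on $\mathcal A$ or $\mathcal L_\rho$ must be either a non-real zero in the $\varepsilon$-neighbourhood of $\Gamma_\alpha$ or lie in a set of cardinality $o(k)$; this yields the $100\%$-claim and simultaneously pins down $C(\alpha)$ (and forces $P_\rho(\alpha)$ to be sharp in this range). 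The delicate step in the whole argument is the remainder estimate near $\mathcal L_\rho$, where the coset $(c,d)=(1,-1)$ produces a third term comparable in size to $T_0$ and $T_1$: the two-term approximation fails in a strip of width $O(k^{-1/2})$ about $\mathcal L_\rho$, which has to be excised and its zero-content absorbed into the $\mathcal L_\rho$-count from \autoref{thm-lrho}, letting $\varepsilon$ shrink slowly with $k$.
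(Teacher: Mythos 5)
Your route to the lower bound is genuinely different from the paper's. The paper never solves for the zeros of the two-term model: it applies the argument principle to $P_{k,m}$ itself around a thin contour enclosing (most of) $\Gamma_\alpha$, using $P_{k,m}\sim\sigma_{1,0}$ above the curve and $P_{k,m}\sim\sigma_{0,1}$ below it (plus the analogous statements for $P'_{k,m}$) to evaluate $\oint P'/P$ asymptotically; the two short crossing pieces near $\mathcal{L}_\rho$ are placed on level sets of $\arg(\sigma_{0,1}/\sigma_{1,0})$ so that the two dominant terms cannot cancel there, and they contribute only $O(\varepsilon)k$. Your scheme --- solve $T_0+T_1=0$ explicitly (modulus condition giving $\Gamma_\alpha$, phase condition $\Phi_k\equiv\pi$) and transfer each solution to a zero of $P_{k,m}$ by Rouch\'e --- is more constructive and, once completed, localises each zero to within $e^{-ck}$ of an explicit point; note that $\tfrac{1}{2\pi}$ times the variation of $\Phi_k=\arg(T_0/T_1)$ along $\Gamma_\alpha$ is exactly what the paper's contour integral computes. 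The completeness step (Rankin on $\mathcal{A}$, \autoref{thm-lrho} on $\mathcal{L}_\rho$, valence formula) is identical to the paper's.

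There are, however, concrete gaps. First, parametrising the curve ``from $\mathcal{L}_i$ at $x=0$ to $\mathcal{L}_\rho$ at $x=1/2$'' by $r=|z|$ covers only half of $\Gamma_\alpha\cap\mathcal{F}$, which is symmetric about the imaginary axis and runs from $x=-1/2$ to $x=1/2$. For mere positivity of $C(\alpha)$ this is harmless, but the $100\%$ claim requires the three lower bounds to sum to $\tfrac{k}{12}+O(\varepsilon)k$, i.e.\ you need the full-curve count $C(\alpha)=\alpha+\tfrac{\alpha}{1/4+F^{-1}(\alpha)^2}+\tfrac{1}{\pi}\arctan\bigl(2F^{-1}(\alpha)\bigr)-\tfrac12$; with half the curve the bookkeeping does not close. (You also do not need monotonicity of $\Phi_k$: the endpoint difference already yields the required lower bound, and the valence formula then forbids extra oscillation.) Second, Rouch\'e needs $(T_0+T_1)'(z_0)\asymp k\bigl(|T_0(z_0)|+|T_1(z_0)|\bigr)$; at a zero this derivative equals $T_0(z_0)\bigl(2\pi i m(1-z_0^{-2})+k/z_0\bigr)$, which degenerates at the unique critical point of $\log(T_0/T_1)$ in the upper half-plane, $z=i\,\tfrac{1+\sqrt{1+16\pi^2\alpha^2}}{4\pi\alpha}$; you should verify this point lies at positive distance from $\Gamma_\alpha$ (it does, but it must be said), and you should check that consecutive solutions are $\asymp 1/k$ apart so the Rouch\'e discs are disjoint. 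Third, near the endpoint of $\Gamma_\alpha$ on $\mathcal{L}_\rho$ the term from $(c,d)=(1,-1)$ is comparable to $T_1$, so your remainder bound is not uniform there; ``absorbing the strip into the $\mathcal{L}_\rho$-count'' is not the right fix --- the zeros lost there are non-real, not on $\mathcal{L}_\rho$ --- but they number $o(k)$ and may simply be dropped from the lower bound, provided your constant $\delta(K)$ is made quantitative as $K$ approaches $\mathcal{L}_\rho$. None of these is fatal, but each must be repaired before the completeness argument is valid.
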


\begin{figure}[ht]
\centering
\includegraphics[trim=0 3cm 0 3cm , clip, width=1\textwidth]{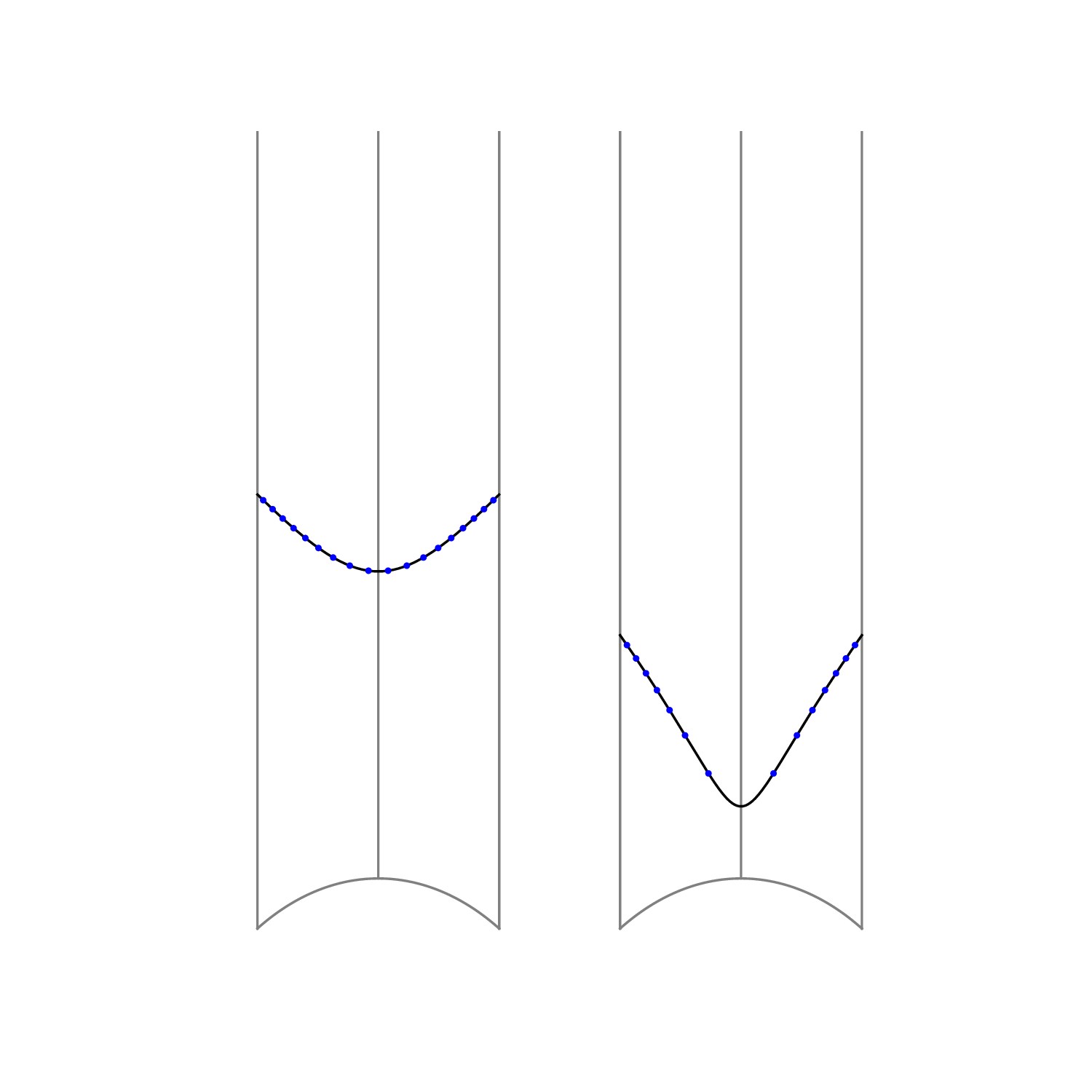}
\caption{The curve $\Gamma_\alpha$ for $\alpha = \frac{90}{1200}$ (left) and $\alpha = \frac{95}{1200}$ (right), and the non-real zeroes of $P_{1200,90}$ and $P_{1200,95}$.}
\label{fig-gamma_alpha}
\end{figure}

\begin{remark}
The zeros in \autoref{fig-gamma_alpha} are not exactly on the curve $\Gamma_\alpha$, only very close to it.
In the following table we calculate for each non-real zero $x+iy$ of $P_{1200,90}$ the value $2\pi\alpha y - \log(|z|)\left(1 - |z|^{-2}\right)^{-1}$ for $\alpha = \frac{90}{1200}$:

\text{ }

\centering
\begin{tabular}{c|c}
    non-real zeros of $P_{1200,90}$ & $2\pi\alpha y - \log(|z|)\left(1 - |z|^{-2}\right)^{-1}$\\
    \hline \hline
    $\approx 0.48 + 2.01 i$ & $\approx10^{-4.66}$ \\
    \hline
    $\approx 0.44 + 1.99 i$ & $\approx10^{-7.58}$ \\
    \hline
    $\approx 0.40 + 1.96 i$ & $\approx10^{-12.00}$ \\
    \hline
    $\approx 0.35 + 1.94 i$ & $\approx10^{-13.88}$ \\
    \hline
    $\approx 0.30 + 1.91 i$ & $\approx10^{-15.80}$ \\
    \hline
    $\approx 0.25 + 1.88 i$ & $\approx10^{-16.24}$ \\
    \hline
    $\approx 0.19 + 1.86 i$ & $\approx10^{-16.81}$ \\
    \hline
    $\approx 0.12 + 1.84 i$ & $\approx10^{-16.22}$ \\
    \hline
    $\approx 0.04 + 1.82 i$ & $\approx10^{-16.52}$ 
\end{tabular}

\end{remark}


\section{Notations and preliminaries}\label{sec-2}
\subsection{Notations}
Let $\alpha > 0$. 
All asymptotic notations are to be understood in the limit $k\rightarrow\infty$.
We allow all implied constants to depend on $\alpha$.
In several places in the paper we will fix some compact subset $\mathcal{C}\subset \mathcal{F}$, or some closed interval $[A,B]\subset \RR$.
In these situations, we allow the implied constants to depend on $\mathcal{C}$, $[A,B]$ as well.
Any other dependence will be specified.

For given $k,m$, we denote
\begin{equation}\label{def-sigmacd}
\sigma_{c,d}(z)
=
\frac{\exp{2\pi i m \gamma z}}{(cz + d)^k}
\end{equation}
so that 
\begin{equation*} 
P_{k,m}(z) = \frac{1}{2}
\sum_{\substack{c,d\in\ZZ \\ \gcd(c,d) = 1}}
\sigma_{c,d}(z).
\end{equation*}
Since $k$ is even, $\sigma_{c,d} = \sigma_{-c,-d}$ (if $a,b$ correspond to $c,d$ then $-a,-b$ correspond to $-c,-d$).
And so we have that
\begin{equation} \label{def-pkm}
P_{k,m}(z) = \sigma_{0,1}(z) + \sum_{\substack{c,d\in\ZZ \\ c>0 \\ \gcd(c,d) = 1}}\sigma_{c,d}(z).
\end{equation}
From here on out, we assume all pairs $(c,d)$ to satisfy $c > 0$, $\gcd(c,d) = 1 $ or $(c,d) = (0,1)$ even when not explicitly written.

Denoting $z = x + iy$, we define
\begin{equation} \label{def-rcd}
r_{c,d}(z) = \frac{\exp{-2\pi\alpha \frac{y}{\left| cz + d\right|^2}}}{\left|cz + d\right|}.
\end{equation}
We see that
$$
\left|\sigma_{c,d}(z)\right| = 
r_{c,d}(z)^{k }
\left(\exp{-2\pi\frac{y}{|cz+d|^2}}\right)^{k\alpha - m}.
$$
Thus, on any compact subset $\mathcal{C}\subset\mathcal{F}$ we have that
$$
r_{c,d}(z)^{k } C_1^{o(k)} \ll
\left|\sigma_{c,d}(z)\right| \ll
r_{c,d}(z)^{k } C_2^{o(k)}
$$
for some constants $C_1,C_2$ depending on $\mathcal{C}$ (but not on $(c,d)$).
We conclude that on $\mathcal{C}$:
\begin{equation}\label{eq-rcd^k}
\left|\sigma_{c,d}(z)\right| = 
\left(r_{c,d}(z)(1 + o(1))\right)^k
\end{equation}
uniformly in $(c,d),z$.

In \autoref{sec-5} we will also consider the logarithm of complex numbers in $\HH$.
In this case we assume the principal branch of the logarithm function:
$$
\log : \HH \rightarrow \SET{x + iy \, : \, y\in(0,\pi)}.
$$
We will also use the principal argument function $\text{Arg} : \HH \rightarrow (0,\pi)$.

We define 
$$
w_{c,d}(z) = \frac{y}{|cz + d|^2}
$$
and 
$$
I(w) = -2\pi \alpha w + \frac{1}{2}\log(w).
$$
With these notations we have
\begin{equation}\label{def-Iw}
\log\left(r_{c,d}(z)\right) = I(w_{c,d}(z)) -\frac{1}{2}\log(y).
\end{equation}

\subsection{Main lemma}
As noted in \eqref{eq-rcd^k}, we have that on any compact subset $\mathcal{C}$ of $\mathcal{F}$:
$$
\left|\sigma_{c,d}(z)\right| = \left(r_{c,d}(z)(1 + o(1))\right)^{k }
$$
and the term $r_{c,d}(z)$ does not depend on $k$.
We now want to show that sums of the form $\sum \sigma_{c,d}$ are dominated by those $\sigma_{c,d}$'s for which $r_{c,d}$ is maximal.

\begin{lemma}\label{lem-main}
Let $\mathcal{C}\subset \mathcal{F}$ be a compact subset.
Let $\mathcal{I}$ be a subset of the indices $(c,d)$.
Assume that we have for some $(c_m,d_m)\not\in\mathcal{I}$:
$$
r_{c_m,d_m}(z) > r_{c,d}(z)
$$
for all $(c,d)\in\mathcal{I}$ and for all $z\in\mathcal{C}$.
Then
$$
\sum_{(c,d)\in\mathcal{I}}\left| \sigma_{c,d}(z)\right|
= o\left(\left|\sigma_{c_m,d_m}(z)\right|\right)
$$
uniformly for $z\in\mathcal{C}$.
\end{lemma}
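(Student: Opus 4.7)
The plan is to split the index set $\mathcal{I}$ according to the size of $|cz+d|$, treating a finite "near" part and an Eisenstein-style "far" tail by different means. Set $r_*(z) := r_{c_m,d_m}(z)$; this is continuous and strictly positive on the compact set $\mathcal{C}$, hence bounded below by some $r_0 > 0$. By \eqref{eq-rcd^k} one has
$$
\frac{|\sigma_{c,d}(z)|}{|\sigma_{c_m,d_m}(z)|} = \left(\frac{r_{c,d}(z)}{r_*(z)}(1+o(1))\right)^{k}
$$
uniformly in $(c,d)$ and $z\in\mathcal{C}$, so it suffices to bound $\sum_{\mathcal{I}}(r_{c,d}/r_*)^k$ by a quantity that beats any $(1+o(1))^k$.

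Fix a threshold $M > 2/r_0$ and let $\mathcal{I}_{\text{near}} := \{(c,d)\in\mathcal{I} : |cz+d|\leq M \text{ for some } z\in\mathcal{C}\}$. Compactness of $\mathcal{C}$, which forces $\Im z \geq y_0 > 0$ and $|z|\leq R$, makes this set finite: $|cz+d|\leq M$ with $c>0$ forces $c\leq M/y_0$ and then $|d|\leq M+cR$. For each of these finitely many $(c,d)$, the continuous ratio $r_{c,d}(z)/r_*(z)$ is strictly less than $1$ everywhere on $\mathcal{C}$ by hypothesis, hence attains a maximum $\delta_{c,d}<1$. Taking $\delta$ to be the largest of these, the near contribution is at most $|\mathcal{I}_{\text{near}}|\,\delta^k$, which decays exponentially and absorbs any $(1+o(1))^k$.

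For $(c,d)\in\mathcal{I}\setminus\mathcal{I}_{\text{near}}$ we have $|cz+d|>M$ on all of $\mathcal{C}$, and since the exponential factor in \eqref{def-rcd} is at most $1$, $r_{c,d}(z)\leq |cz+d|^{-1}$. A standard annular lattice-point count gives $\#\{(c,d) : |cz+d|\in [N,N+1)\} = O(N)$ uniformly on $\mathcal{C}$ (with implied constant depending on $y_0$), so for $k\geq 4$,
$$
\sum_{(c,d)\notin\mathcal{I}_{\text{near}}} \left(\frac{r_{c,d}(z)}{r_*(z)}\right)^{\!k}
\leq r_*(z)^{-k}\sum_{N\geq M} O(N)\,N^{-k}
= O\!\left(M^2 (Mr_*(z))^{-k}\right).
$$
Since $Mr_*(z)\geq Mr_0>2$ on $\mathcal{C}$, this decays at geometric rate strictly less than $1/2$, again beating $(1+o(1))^k$.

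The main subtlety is upgrading the strict pointwise inequality $r_{c,d}<r_*$ given by the hypothesis to a uniform quantitative gap $\delta<1$. This is exactly where both compactness of $\mathcal{C}$ and finiteness of $\mathcal{I}_{\text{near}}$ are needed, and it is what motivates splitting at the threshold $M$: the far tail is handled by an entirely separate mechanism (the convergence of the Eisenstein-type lattice sum), which requires no such gap.
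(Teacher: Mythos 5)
Your proof is correct and follows essentially the same strategy as the paper's: split $\mathcal{I}$ into a finite set of indices where $|cz+d|$ can be small, on which compactness upgrades the strict pointwise inequality to a uniform gap $\delta<1$, plus a far tail controlled by the convergence of the Eisenstein-type lattice sum together with a positive lower bound on $r_{c_m,d_m}$. The only cosmetic difference is in the tail estimate: you count lattice points in annuli directly, whereas the paper bounds $\sum |cz+d|^{-k}$ by $\left(\sum |cz+d|^{-3}\right)^{k/3}$ via monotonicity of $\ell^p$ norms with the cutoff chosen relative to $\nu=\min_{\mathcal{C}} r_{c_m,d_m}$; both yield the required geometric decay.
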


\begin{proof}
We first note that
\begin{equation*}
r_{c,d}(z) \leq \frac{1}{\left|cz + d\right|},\quad
\left|\sigma_{c,d}(z)\right| \leq \frac{1}{\left|cz + d\right|^k}.
\end{equation*}
We denote
$$
\nu
=
\min_{z\in\mathcal{C}}r_{c_m,d_m}(z).
$$
We claim that $\nu<1$.
This follows from the fact that $|c_mz + d_m| \geq 1$ for $z\in \mathcal{F}$, so that we have
$$
r_{c_m,d_m}(z) = \frac{\exp{-2\pi\alpha \frac{y}{\left| c_mz + d_m\right|^2}}}{\left|c_mz + d_m\right|}
\leq \exp{-2\pi\alpha \frac{y}{\left| c_mz + d_m\right|^2}}.
$$
This last expression is strictly smaller than $1$ since $\alpha > 0$ and $\frac{y}{\left| c_mz + d_m\right|^2}>0$, and so we have $\nu<1$.

We denote also 
$$
\delta =
\max_{\substack{z\in\mathcal{C}\\ (c,d) \in \mathcal{I}}}
\frac{r_{c,d}(z)}{r_{c_m,d_m}(z)}.
$$
This maximum is achieved since $\mathcal{C}$ is compact, and since $r_{c,d}(z) \xrightarrow{c,d\rightarrow\infty} 0 $.
Note also that $\delta < 1$ from our assumption that $r_{c_m,d_m}$ is maximal in $\mathcal{C}$.

Let $N > 0$ be such that
$$
\sum_{\substack{(c,d)\in\mathcal{I} \\ |cz + d| > N}}
\frac{1}{|cz + d|^3} < \nu^6
$$
for all $z\in\mathcal{C}$.
Denote by $\mathcal{N}$ the finite set of all $(c,d)\in\mathcal{I}$ such that 
$\min_{z\in\mathcal{C}} |cz + d| < N$.

For $(c,d)$'s in $\mathcal{N}$ we have that
\begin{multline*}
\frac{1}{\left|\sigma_{c_m,d_m}(z) \right|}
\sum_{\substack{(c,d)\in \mathcal{N}}}\left|\sigma_{c,d}(z) \right|
= \\
\sum_{\substack{(c,d)\in \mathcal{N}}}
\left(\frac{r_{c,d}(z)(1 + o(1))}{r_{c_m,d_m}(z)(1 + o(1))}\right)^{k}
< 
\left|\mathcal{N}\right|\cdot\delta^k (1 + o(1))^k.
\end{multline*}

We now consider those $(c,d)$'s in $\mathcal{I}$ which are not in $\mathcal{N}$.
We get that
\begin{multline*}
\frac{1}{\left|\sigma_{c_m,d_m}(z) \right|}
\sum_{\substack{(c,d)\in\mathcal{I} \\ (c,d)\not\in \mathcal{N}}}
\left|\sigma_{c,d}(z) \right|
< \\
\left(\nu(1 + o(1))\right)^{-k}
\sum_{\substack{(c,d)\in\mathcal{I} \\ |cz + d| > N}}
\frac{1}{|cz + d|^{k}} 
\leq \\
\left(\nu(1 + o(1))\right)^{-k}
\left(\sum_{\substack{(c,d)\in\mathcal{I} \\ |cz + d| > N}}
\frac{1}{|cz + d|^3}\right)^{k/3}
< \\
\left(\nu(1 + o(1))\right)^{-k} \left(\nu^6\right)^{k/3}
<
\nu^{k} (1 + o(1))^{-k}.
\end{multline*}
For the second inequality we used the monotonicity of norms $\|\cdot\|_k \leq \|\cdot\|_3$.

Overall, we have shown that
$$
\frac{1}{\left|\sigma_{c_m,d_m}(z) \right|}
\sum_{\substack{(c,d)\in \mathcal{I}}}\left|\sigma_{c,d}(z) \right|
<
\left|\mathcal{N}\right|\cdot \delta^{k}(1 + o(1))^k + \nu^{k} (1 + o(1))^{-k}
$$
which is independent of $z$ and tends to $0$ as $k\rightarrow\infty$ since $\delta,\nu < 1$.
\end{proof}

\section{Zeros on \texorpdfstring{$\mathcal{L}_\rho$}{the line 0.5 + it}}\label{sec-3}

\subsection{Sketch of proof}

The Fourier coefficients of $P_{k,m}$ are known to be real. 
Thus, as previously stated, it is known that $P_{k,m}\hpt$ is real for $t\geq \mint$.
This can also be seen directly from the definition \eqref{def-pkm} since 
$$
\sigma_{c,d}\hpt = \overline{\sigma_{c,-c-d}\hpt},
$$
which follows from \eqref{def-sigmacd} after some manipulations (noting that if $a,b$ correspond to $c,d$, then $-a,a+b$ correspond to $c,-c-d$).

Following the idea from \cite{MR0260674}, we wish to show that $t\mapsto P_{k,m}\hpt\in\RR$ has many sign changes, which forces the existence of many zeros on $\mathcal{L}_\rho$.
For this, we wish to show that the "main term" responsible for the oscillatory behavior of $P_{k,m}\hpt$ comes from $\sigma_{1,0}$ and $\sigma_{1,-1}$.
And so, we denote
$$
M(t) = 
\sigma_{1,0}\hpt + 
\sigma_{1,-1}\hpt.
$$
From \eqref{def-sigmacd} we see that $M(t) = 2R_M(t) \cos\left(\Theta_M(t)\right)$ with
\begin{equation}\label{def-rm}  
R_M(t) = \exp{ \frac{-2\pi mt}{\frac{1}{4} + t^2}}
\left(\frac{1}{4} + t^2\right)^{-k/2}
\end{equation}

and
\begin{equation}\label{def-thetam}
\Theta_M(t) = 
\frac{\pi m}{\frac{1}{4} + t^2} +
 k \cdot \arctan{2t}.
\end{equation}

We will see that as $t$ goes from $\mint$ to $\infty$, the function $\Theta_M(t)$ varies enough so that $M(t)$ has many sign changes.
We then wish to show that the rest of $P_{k,m}\hpt$ is significantly smaller than $R_M(t)$, which will force $P_{k,m}\hpt$ to have many sign changes as well.
That is, we wish to show that
\begin{equation}\label{eq-small_diff}  
\left| P_{k,m}\hpt - M(t)\right| = o\left(R_M(t)\right).
\end{equation}
We will see however that \eqref{eq-small_diff} does not hold for all $t\geq\mint$.
For $t$'s close to $\mint$, other terms of the form $\sigma_{c,d}\hpt$ can be larger than $R_M(t)$.
For example, when $\alpha \leq \frac{1}{2\pi\sqrt{3}}$, we will see that $\left|\sigma_{0,1}\hpt\right| =  \exp{- 2\pi m t}$ is going to be larger than $R_M(t)$ for such $t$'s.
And so, we will consider only sufficiently large $t$'s where we will show that \eqref{eq-small_diff} holds.

\subsection{Proof of \autoref{thm-lrho}}
We denote 
$$
r_M(t) = r_{1,0}\hpt = r_{1,-1}\hpt
$$
so that on any compact subset $\mathcal{C}\subset \mathcal{F}$ we have $R_M(t) = \left(r_M(t)(1 + o(1))\right)^{k}$.
We denote also
$$
\mathcal{I} = \SET{(c,d) \, : \, (c,d) \neq (1,0),(1,-1)}.
$$

We begin by giving two propositions establishing sub-segments of $\mathcal{L}_\rho$ where $r_M(t) > r_{c,d}\hpt$ for all $(c,d)\in\mathcal{I}$.
We will then use \autoref{lem-main} to show that \eqref{eq-small_diff} holds in these segments, and calculate the number of sign changes of $M(t)$ in these segments.

\begin{proposition}\label{prop-r01<rm}
Let 
\begin{equation}\label{def-F}
F(t) = 
\frac{1}{4\pi}
\frac{\qpt\log\qpt}{t\left(t^2 - \frac{3}{4}\right)}.
\end{equation}
If $\alpha \leq \frac{1}{2\pi \sqrt{3}}$, let $A,B$ be such that $F^{-1}(\alpha) < A < B$.
Otherwise, if $\alpha > \frac{1}{2\pi \sqrt{3}}$, let $A,B$ be such that $\mint < A < B$.
Then we have that $r_{0,1}\hpt < r_M(t)$ for all $t\in[A,B]$.
\end{proposition}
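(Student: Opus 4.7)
The plan is to compute both $r_{0,1}\!\left(\tfrac{1}{2} + it\right)$ and $r_M(t)$ directly from \eqref{def-rcd}, and reduce the desired inequality to the form $F(t) < \alpha$. At $z = \tfrac{1}{2} + it$ we have $y = t$, $|0 \cdot z + 1| = 1$, and $|z|^2 = \tfrac{1}{4} + t^2$, so
$$
r_{0,1}\!\left(\tfrac{1}{2}+it\right) = e^{-2\pi\alpha t}, \qquad r_M(t) = \frac{\exp\!\left(-\frac{2\pi\alpha t}{\frac{1}{4}+t^2}\right)}{\sqrt{\tfrac{1}{4}+t^2}}.
$$
Taking logarithms, $r_{0,1} < r_M$ is equivalent to
$$
\tfrac{1}{2}\log\!\left(\tfrac{1}{4}+t^2\right) < \frac{2\pi\alpha t\,(t^2 - \tfrac{3}{4})}{\tfrac{1}{4}+t^2},
$$
and for $t > \tfrac{\sqrt{3}}{2}$ (where $t^2 - \tfrac{3}{4} > 0$) dividing through puts this exactly into the form $F(t) < \alpha$ with $F$ as in \eqref{def-F}.

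The proposition then reduces to a monotonicity statement about $F$ on $\left(\tfrac{\sqrt{3}}{2}, \infty\right)$. A L'Hopital (or Taylor expansion in $u = t^2 - \tfrac{3}{4}$) computation yields
$$
\lim_{t \to (\sqrt{3}/2)^+} F(t) = \frac{1}{2\pi\sqrt{3}}, \qquad \lim_{t \to \infty} F(t) = 0,
$$
so the key claim is strict monotonicity. I would prove this by substituting $s = \tfrac{1}{4} + t^2$ so that $F(t) = \frac{s \log s}{4\pi t(s-1)}$, and reducing the sign of $F'(t)$ to the sign of the auxiliary function
$$
\phi(s) = \left(-s^2 - s + \tfrac{1}{2}\right)\log s + 2s^2 - \tfrac{5}{2}s + \tfrac{1}{2}.
$$
Direct computation gives $\phi(1) = \phi'(1) = 0$, $\phi''(1) = -\tfrac{1}{2}$, and
$$
\phi'''(s) = -\frac{(2s+1)(s-1)}{s^3} < 0 \text{ for } s > 1.
$$
Iterating propagates the sign: $\phi''$ is decreasing from $-\tfrac{1}{2}$ so $\phi'' < 0$; hence $\phi'$ is decreasing from $0$ so $\phi' < 0$; hence $\phi$ is decreasing from $0$ so $\phi < 0$ on $(1, \infty)$, giving $F'(t) < 0$. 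This is the main (and only nontrivial) technical step.

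With monotonicity in hand, both cases fall out. For $\alpha \leq \tfrac{1}{2\pi\sqrt{3}}$ the value $F^{-1}(\alpha) \in \left[\tfrac{\sqrt{3}}{2}, \infty\right)$ is well defined and $F(t) < \alpha$ precisely for $t > F^{-1}(\alpha)$, so the conclusion holds on any $[A,B]$ with $F^{-1}(\alpha) < A$. For $\alpha > \tfrac{1}{2\pi\sqrt{3}}$, strict decrease combined with the boundary limit forces $F(t) < \tfrac{1}{2\pi\sqrt{3}} < \alpha$ throughout $\left(\tfrac{\sqrt{3}}{2}, \infty\right)$, so the conclusion holds on any $[A,B] \subset \left(\tfrac{\sqrt{3}}{2}, \infty\right)$.
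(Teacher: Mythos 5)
Your proposal is correct and follows essentially the same route as the paper: compute $r_{0,1}$ and $r_M$ on the line $\Re(z)=\tfrac12$, take logarithms, and reduce the inequality to $F(t)<\alpha$ with $F$ as in \eqref{def-F}. The only difference is that you supply a full proof of the strict monotonicity of $F$ on $\left(\tfrac{\sqrt{3}}{2},\infty\right)$ via the auxiliary function $\phi(s)$ (whose values $\phi(1)=\phi'(1)=0$, $\phi''(1)=-\tfrac12$ and the sign of $\phi'''$ all check out), whereas the paper merely asserts this monotonicity, together with the boundary values $\tfrac{1}{2\pi\sqrt{3}}$ and $0$, in the remark following the proposition.
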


\begin{remark}
The function $F$ is decreasing on $\left[\frac{\sqrt{3}}{2},\infty\right)$.
Its value at $\frac{\sqrt{3}}{2}$ is $\frac{1}{2\pi\sqrt{3}}$, and its value at $\infty$ is $0$.
\end{remark}

\begin{proof}
We have that
$$
r_{0,1}\hpt = \exp{-2\pi \alpha t}
$$
and
$$
r_M(t) = \exp{-2\pi \alpha \frac{t}{\frac{1}{4} + t^2}}\qpt^{-\frac{1}{2}}.
$$
After taking log, the condition $r_{0,1}\hpt < r_M(t)$ becomes
$$
-2\pi \alpha t
<
-2\pi \alpha \frac{t}{\frac{1}{4} + t^2}
- \frac{1}{2}\log\qpt.
$$
This simplifies to $F(t) < \alpha$, or equivalently: $t > F^{-1}(\alpha)$.
For $\alpha > \frac{1}{2\sqrt{3}\pi}$ the condition above is satisfied for all $t>\mint$.
\end{proof}

\begin{proposition}\label{prop-rcd<rm}
For $\alpha \leq \frac{\sqrt{3}\log(3)}{4\pi}$ we define $T = \frac{\sqrt{3}}{2}$.
For $\alpha > \frac{\sqrt{3}\log(3)}{4\pi}$ we define $T$ to be the largest real solution in $t$ to:
$$
2\pi\alpha \frac{2t}{\qpt\left(\frac{1}{4} + t^2 + 2\right)}-\frac{1}{2}\log\left( 1 + \frac{2}{\frac{1}{4} + t^2}\right) = 0,
$$
noting that we have $T > \frac{\sqrt{3}}{2}$.

Then for $T < A< B$, we have that:
$$
r_{c,d}\hpt < r_{1,0}\hpt = r_{1,-1}\hpt
$$
for all $(c,d)\neq(1,0),(1,-1),(0,1)$ and all $t\in[A,B]$.
\end{proposition}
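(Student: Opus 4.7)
The plan is to recast the inequality $r_{c,d}(z) < r_{1,0}(z) = r_{1,-1}(z)$ on $\mathcal{L}_\rho$ in terms of $I(w) = -2\pi\alpha w + \tfrac12\log w$, by invoking $\log r_{c,d}(z) = I(w_{c,d}(z)) - \tfrac12\log y$ from \eqref{def-Iw}. Setting $z = \tfrac12 + it$, $a = |z|^2 = \tfrac14 + t^2$, and $b = |cz+d|^2$, the comparison reduces to
\[
g(b) := \tfrac{1}{2}\log(b/a) - 2\pi\alpha t\cdot(b-a)/(ab) > 0.
\]
I would begin with a brief case analysis of $c$: using $\gcd(c,d) = 1$ and $t \geq \tfrac{\sqrt{3}}{2}$, every pair $(c,d)\neq(1,0),(1,-1),(0,1)$ satisfies $|cz+d|^2 \geq \tfrac{9}{4}+t^2 = a+2$, with equality precisely at $(c,d)=(1,1)$ and $(1,-2)$ (the only borderline competitor $(2,-1)$ gives $4t^2$, which strictly exceeds $a+2$ for $t > \tfrac{\sqrt{3}}{2}$). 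So it suffices to verify $g(b) > 0$ for every $b \geq a+2$ arising among these pairs.

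I would then study $g(b)$ as a function of $b$ for fixed $t$. Since
\[
g'(b) = \frac{1}{2b} - \frac{2\pi\alpha t}{b^2} = \frac{b - 4\pi\alpha t}{2b^2},
\]
the function $g$ has a single critical point at $b = 4\pi\alpha t$, which is a minimum. Combined with $g(a) = 0$ and $g(b) \to +\infty$ as $b \to \infty$, this constrains $g$ to have at most one additional zero $b^\star$ on $[a,\infty)$ (none if $4\pi\alpha t \leq a$), past which $g$ is strictly positive and increasing. Consequently the single inequality $g(a+2) > 0$ forces $a+2 > b^\star$, and hence $g(b) > 0$ for every $b \geq a+2$. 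A direct substitution gives
\[
g(a+2) = \tfrac12\log\!\left(1 + \tfrac{2}{\tfrac14+t^2}\right) - 2\pi\alpha\cdot\tfrac{2t}{(\tfrac14+t^2)(\tfrac14+t^2+2)},
\]
so $g(a+2) = 0$ is precisely the equation defining $T$ in the statement.

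To close the argument, set
\[
\phi(t) := \frac{(\tfrac14+t^2)(\tfrac94+t^2)}{8\pi t}\log\!\left(1 + \tfrac{2}{\tfrac14+t^2}\right),
\]
so $g(a+2) > 0$ is equivalent to $\phi(t) > \alpha$. Direct computation gives $\phi(\tfrac{\sqrt{3}}{2}) = \tfrac{\sqrt{3}\log 3}{4\pi}$, while asymptotically $\phi(t) \to +\infty$. For $\alpha \leq \tfrac{\sqrt{3}\log 3}{4\pi}$, take $T = \tfrac{\sqrt{3}}{2}$ and use a derivative analysis of $\phi$ to show that its minimum on $[\tfrac{\sqrt{3}}{2},\infty)$ is attained at the endpoint, so $\phi(t) > \phi(\tfrac{\sqrt{3}}{2}) \geq \alpha$ for every $t > \tfrac{\sqrt{3}}{2}$. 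For $\alpha > \tfrac{\sqrt{3}\log 3}{4\pi}$, the inequalities $\phi(\tfrac{\sqrt{3}}{2}) < \alpha < \phi(+\infty)$ produce a nonempty level set $\{\phi = \alpha\}$, and taking $T$ to be its largest element (equivalently, the largest real solution of the equation in the statement), $\phi \to +\infty$ together with continuity forces $\phi > \alpha$ on $(T,\infty)$.

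The main obstacle is the second step, propagating $g(a+2) > 0$ to $g(b) > 0$ for all admissible $b$; this depends essentially on the unimodal structure of $g(\cdot)$ in $b$ and needs care in the regime $4\pi\alpha t > a$, where $g$'s minimum genuinely lies inside $[a,\infty)$ and $g$ goes temporarily negative. A secondary but necessary technical point is the monotonicity claim for $\phi$ in the regime $\alpha \leq \tfrac{\sqrt{3}\log 3}{4\pi}$, ruling out any interior dip below the endpoint value; this reduces to a routine but somewhat lengthy analysis of $\phi'$.
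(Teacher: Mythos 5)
Your proposal is correct and follows essentially the same route as the paper: reduce the comparison to the minimal competitor $|cz+d|^2 = \tfrac14+t^2+2$ attained at $(1,\pm 1)$-type pairs, obtain exactly the boundary equation defining $T$, and split into the two ranges of $\alpha$ exactly as in the statement. The only (cosmetic) difference is that you propagate positivity from $b=a+2$ to larger $b$ via the unimodality of $g(\cdot)$ in $b$, whereas the paper factors the log-difference as $\frac{s-1}{s}\bigl(\frac{2\pi\alpha t}{1/4+t^2}-\frac12\frac{s\log s}{s-1}\bigr)$ and uses monotonicity of $\frac{s\log s}{s-1}$ in $s=b/a$; both arguments are valid and leave the same final elementary sign check to a routine computation.
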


\begin{proof}
Using \eqref{def-rcd}, we find that the condition $r_{c,d}\hpt < r_{1,0}\hpt$ is equivalent (after taking log) to: 
\begin{equation*}
2\pi\alpha\frac{t}{\frac{1}{4} + t^2}
\frac{\qpt (c^2 - 1) + cd + d^2}{\qpt c^2 + cd + d^2}
-\frac{1}{2}\log\left( \frac{\qpt c^2 + cd + d^2}{\frac{1}{4} + t^2}\right) < 0.
\end{equation*}
Denote $s = c^2 + \frac{cd + d^2}{\frac{1}{4} + t^2}$.
Then the last expression can be written as 
\begin{equation*}
\frac{s-1}{s}\left( \frac{2\pi\alpha t}{\frac{1}{4} + t^2} - \frac{1}{2}\frac{\log(s)}{(s-1)/s}\right).
\end{equation*}

The function $\frac{\log(s)}{(s-1)/s}$ increases with $s$.
For any given $t>\mint$, amongst all $(c,d)\neq(1,0),(1,-1),(0,1)$, the value $s = c^2 + \frac{cd + d^2}{\frac{1}{4} + t^2}$ is minimal for $(c,d) = (1,1)$ or $(1,-2)$ for which we have $s = 1 + \frac{2}{\frac{1}{4} + t^2}$.
However, in this case, the expression
$$
 \frac{2\pi\alpha t}{\frac{1}{4} + t^2} - \frac{1}{2}\frac{\log(s)}{(s-1)/s}
$$
will be negative for all $t> T$.
Indeed, for $\alpha\leq \frac{\sqrt{3}\log(3)}{4\pi}$ we have
$$
 \frac{2\pi\alpha t}{\frac{1}{4} + t^2} - \frac{1}{2}\frac{\log(s)}{(s-1)/s}
 < 
 \frac{\sqrt{3} \log(3) t}{2(\frac{1}{4} + t^2)} - \frac{1}{2}\frac{\log(s)}{(s-1)/s}
$$
which is negative for all $t> \frac{\sqrt{3}}{2}$ following a straightforward computation.
For $\alpha > \frac{\sqrt{3}\log(3)}{4\pi}$ the negativity follows from the definition of $T$ and noting that the expression is negative for all sufficiently large $t$.
In either case we have that $r_{c,d}\hpt < r_{1,0}\hpt$ for all $(c,d) \neq (1,0),(1,-1), (0,1)$ and all $t > T$.
\end{proof}

We are now ready to prove \autoref{thm-lrho}.
\begin{proof}
Let $\alpha > 0$, and let $\mint< A<B$ be chosen such that the hypotheses of \autoref{prop-r01<rm} and of \autoref{prop-rcd<rm} are satisfied.
Applying \autoref{lem-main} with the compact set $\SET{\hpt \, : \, t\in[A,B]}$, the set of indices $\mathcal{I}$, and the maximal term $r_{1,0} = r_{1,-1}$, we get
$$
\left| 
P_{k,m}\hpt - M(t) 
\right| 
= o\left(R_M(t)\right)
$$
uniformly for $t\in [A,B]$.
And so, we want to understand how many sign changes 
$$
M(t) = 2R_M(t) \cos\left(\Theta_M(t)\right)
$$ 
has in this range.

We denote
$$
\vartheta(t) = \frac{\pi \alpha}{\frac{1}{4} + t^2} + \arctan{2t}
$$
so that $\Theta_M(t) = k\vartheta(t) + o(k)$.
We also denote $V_A^B(\vartheta)$ the total variation of $\vartheta$ from $A$ to $B$.
With these notations, the number of sign changes of $M(t)$ in $[A,B]$ is $\frac{k}{\pi} V_A^B(\vartheta) + o(k)$, which also implies (as a lower bound) the same amount of sign changes in $P_{k,m}\hpt$.

We first consider the case where $\alpha \leq \frac{1}{2\sqrt{3}\pi}$.
In this case, the function $\vartheta$ is increasing on $t\geq\mint$, so that $V_A^B(\vartheta) = \vartheta(B) - \vartheta(A)$.
Thus, the number of zeros of $P_{k,m}$ on $\mathcal{L}_\rho$ is at least 
$$
\frac{k}{\pi} V_A^B(\vartheta)  + o(k).
$$
For $\alpha \leq \frac{1}{2\sqrt{3}\pi}$ we can choose any $A<B$ satisfying $F^{-1}(\alpha) < A< B$.
It follows then that the number of zeros of $P_{k,m}$ on $\mathcal{L}_\rho$ is at least 
$$
\left(\frac{12}{\pi} V_{F^{-1}(\alpha)}^\infty(\vartheta)\right) \frac{k}{12} + o(k).
$$
We now calculate,
$$
V_{F^{-1}(\alpha)}^\infty
=
\vartheta(\infty) - \vartheta(F^{-1}(\alpha))
=
\frac{\pi}{2} - \frac{\pi \alpha}{\frac{1}{4} + F^{-1}(\alpha)^2} - \arctan{2F^{-1}(\alpha)}
$$
and so, in the case $\alpha \leq \frac{1}{2\sqrt{3}\pi}$, we can set 
$$
P_\rho(\alpha) = 
12\left(\frac{1}{2} - \frac{ \alpha}{\frac{1}{4} + F^{-1}(\alpha)^2} - \frac{1}{\pi}\arctan{2F^{-1}(\alpha)}\right).
$$

We now consider the case $\frac{1}{2\pi \sqrt{3}} < \alpha \leq \frac{\sqrt{3}\log(3)}{4\pi}$.
As before, we have that the number of zeroes of $P_{k,m}$ on $\mathcal{L}_\rho$ is at least $\frac{k}{\pi} V_A^B(\vartheta) + o(k)$.
This time we can choose any $A<B$ with $A > \mint$.
And so we get that the number of zeroes on $\mathcal{L}_\rho$ is at least 
$$
\frac{k}{\pi}V_{\mint}^{\infty}(\vartheta) + o(k).
$$
This time, the function $\vartheta$ is first decreasing and then increasing on $t\geq\mint$, with the minimum attained at 
$$
t_\text{min} = 2\pi \alpha + \sqrt{4\pi^2 \alpha^2 - \frac{1}{4}}
$$
at which the value of $\vartheta$ is 
$$
\vartheta(t_\text{min})
=
\frac{\pi\alpha}{\frac{1}{4} + t_\text{min}^2} + \arctan{2t_\text{min}}.
$$
It follows that 
\begin{multline*}
V_{\mint}^\infty = 
\left(\vartheta\left(\mint\right) - \vartheta(t_\text{min})\right) 
+ 
\left(\vartheta(\infty) - \vartheta(t_\text{min})\right)
=\\
\left(\pi\alpha + \frac{\pi}{3} -
\vartheta(t_\text{min})\right)
+
\left(\frac{\pi }{2} - \vartheta(t_\text{min})\right).
\end{multline*}
And so, in this case, we can set
\begin{multline*}
P_\rho(\alpha) = 
\frac{12}{\pi}\left(\pi\alpha + \frac{\pi}{3} - 
 \vartheta(t_\text{min})\right) +
\frac{12}{\pi}\left(\frac{\pi }{2} - \vartheta(t_\text{min})\right) = \\
12\left(\frac{5}{6} + \alpha - \frac{2}{\pi}\vartheta(t_\text{min})\right).
\end{multline*}
A plot of the values $P_\rho(\alpha)$ described above is given in \autoref{fig-palpha}. 
\begin{figure}[ht]
\centering
\includegraphics[trim=2cm 0 2cm 2cm, width=1\textwidth]{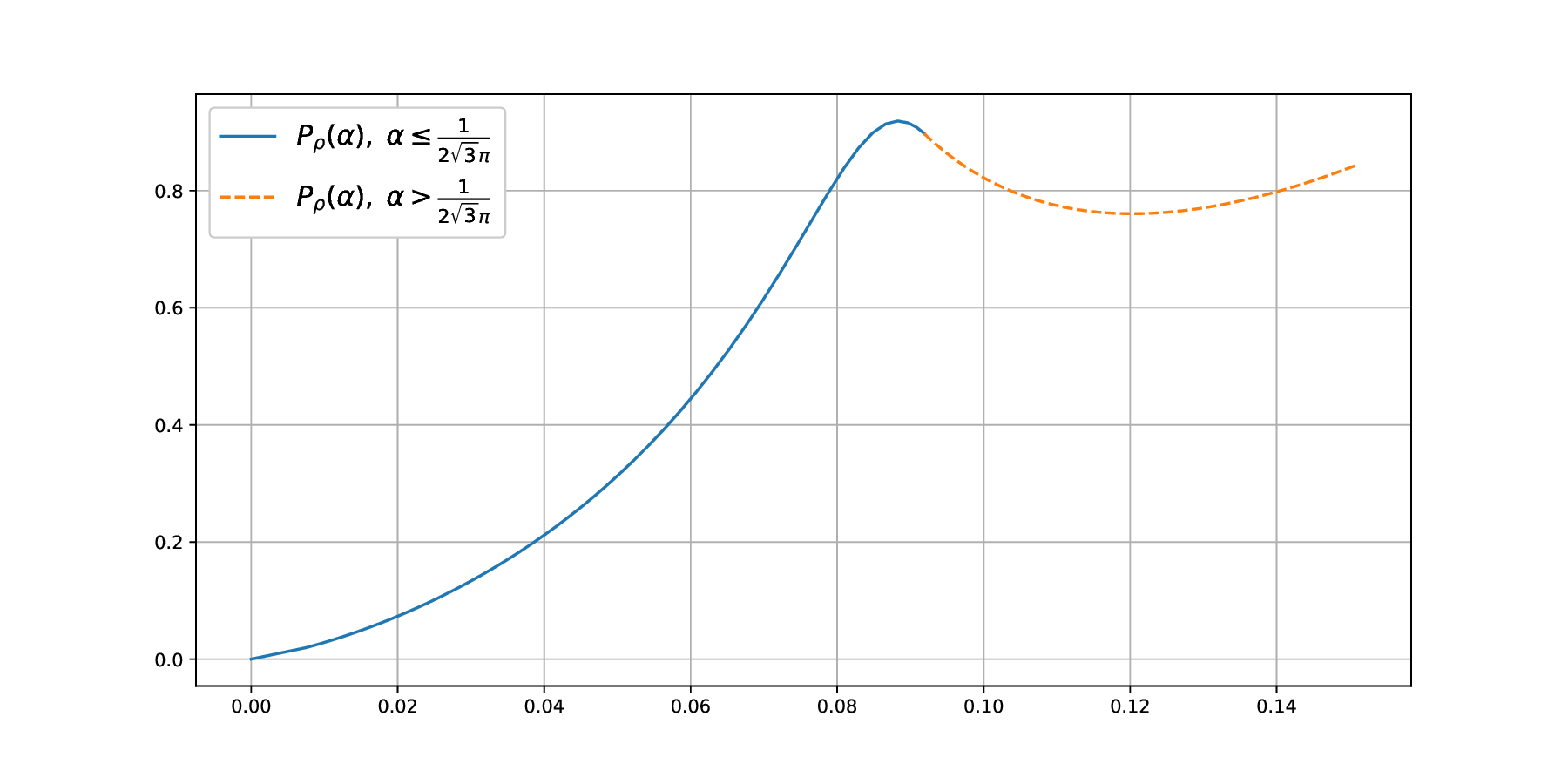}
\caption{The values of $P_\rho(\alpha)$ described in the proof, for the ranges $0<\alpha \leq \frac{1}{2\sqrt{3}\pi}$ and $\frac{1}{2\sqrt{3}\pi} < \alpha \leq \frac{\sqrt{3}\log(3)}{4\pi}$.}\label{fig-palpha}
\end{figure}

We now consider the case where $\alpha > \frac{\sqrt{3}\log(3)}{4\pi}$.
Let $T > \mint$ be as in \autoref{prop-rcd<rm}.
As in the previous cases, we have that the number of zeroes of $P_{k,m}$ on $\mathcal{L}_\rho$ is at least $\frac{k}{\pi} V_A^B(\vartheta) + o(k)$ for any $T<A<B$, which implies that the number of such zeros is at least $\frac{k}{\pi} V_T^\infty(\vartheta) + o(k)$.
Since $V_{T}^\infty(\vartheta) > 0$, we conclude that the number of zeros of $P_{k,m}$ on $\mathcal{L}_\rho$, which we denote $N_{\mathcal{L}_\rho}$, satisfies $N_{\mathcal{L}_\rho}\gg k$.
This proves that a positive proportion of the zeros of $P_{k,m}$ lie on $\mathcal{L}_\rho$.

\end{proof}


\section{Zeros on \texorpdfstring{$\mathcal{L}_i$}{the line it}}\label{sec-4}

\subsection{Sketch of proof}
As before, we have that 
\begin{equation*}
P_{k,m}(it) = \sigma_{0,1}(it) + \sum_{\substack{c,d\in\ZZ \\ c>0 \\ \gcd(c,d) = 1}}\sigma_{c,d}(it).
\end{equation*}
A simple calculation shows that
$$
\sigma_{c,d}\left(it\right) = 
\exp{\frac{-2\pi m t}{c^2 t^2 + d^2}} 
\exp{2\pi i m\frac{ac t^2 + bd}{c^2 t^2 + d^2}}
\left(cti + d \right)^{-k}.
$$
Once more it can be seen directly from this definition that $P_{k,m}(it)$ is real, since
$$
\sigma_{c,d}(it) = \overline{\sigma_{c,-d}(it)}.
$$

A similar argument to the case of $\mathcal{L}_\rho$ shows that for $t$ large enough, the sum in \eqref{def-pkm} is dominated by $\left|\sigma_{1,0}(it)\right|$.
However, unlike the case of $\mathcal{L}_\rho$, this time 
$$
\sigma_{1,0}(it) = (ti)^{-k}\exp{-2\pi m \frac{1}{t}}
$$
does not oscillate, and so does not contribute any sign changes.
Thus, in order to find a large amount of sign changes on $\mathcal{L}_i$, we must consider smaller $t$'s where other terms in \eqref{def-pkm} are dominant.

We are going to show that as $t$ goes from $1$ to $\infty$, the indices $(c,d)$ for which $r_{c,d}(it)$ is maximal can change several times before stabilizing on $(1,0)$.
We show that if $\alpha > \frac{\log(2)}{2\pi}$,  then there is some interval of time $t\in [A,B]$,  $1< A  < B$, such that in this interval $r_{1,\pm 1}(it)$ are maximal amongst all other $r_{c,d}$'s.
Since $\sigma_{1, 1}(it) + \sigma_{1, -1}(it)$ exhibits oscillatory behavior, we will conclude that $P_{k,m}(it)$ has many sign changes in this segment, which will show that a positive proportion of the zeros lie on $\mathcal{L}_i$.

\subsection{Proof of \autoref{thm-li}}
\begin{proof}
We denote
$$
M(t) = \sigma_{1,1}(it) + \sigma_{1,-1}(it).
$$
We see that $M(t) = 2R_M(t)\cos\left(\Theta_M(t)\right)$ with
$$
R_M(t) = \frac{\exp{-2\pi m \frac{t}{t^2+1}}}{\left(t^2 + 1\right)^{k/2}}
$$
and
$$
\Theta_M(t) = 2\pi m\frac{t^2 }{t^2 + 1} - k\arctan{t}
=
k\left( \frac{2\pi \alpha t^2  }{t^2 + 1} - \arctan{t}\right) + o(k)
.
$$

From \eqref{def-Iw}, we see that for a given $z$, $r_{c,d}(z)$ is maximal amongst all $c,d$'s if $I(w_{c,d})$ is maximal.
The function $I(w) = -2\pi\alpha w  + \frac{1}{2}\log(w)$ is maximal at $w = (4\pi\alpha)^{-1}$.
Assume first that $\alpha > \frac{1}{2\pi}$.
Let $t_0$ be the larger solution to 
$$
w_{1,\pm 1}(it)=
\frac{t}{t^2 + 1} = 
(4\pi\alpha)^{-1}
$$ 
Note that $t_0> 1$ when $\alpha > \frac{1}{2\pi}$.
Thus, at $t_0$, $r_{1,\pm 1}(it)$ are maximal amongst all other $r_{c,d}(it)$ (since $w_{c,d}(it_0) \neq (4\pi\alpha)^{-1}$ for all other $c,d$).
From continuity, we can find a segment $[A,B]$ around $t_0$ such that 
\begin{equation}\label{eq-r11_max}
r_{1,\pm 1}(it) > r_{c,d}(it)
\end{equation}
for all other $c,d$'s and for all $t\in[A,B]$.

For $\frac{\log(2)}{2\pi }<\alpha \leq \frac{1}{2\pi}$, it can be easily verified that at $t = 1$: $I(w_{c,d}(i\cdot 1))$ is maximal for $(c,d) = (1,\pm1)$.
Thus, from continuity, we can find $1 < A < B$ such that \eqref{eq-r11_max} is satisfied for all $(c,d)\neq(1,\pm1)$ and for all $t\in[A,B]$.
In either case we found a segment $[A,B]\subset (1,\infty)$ where \eqref{eq-r11_max} holds.

We now apply \autoref{lem-main} with the compact set $\SET{it\,:\, t\in [A,B]}$, and the set of indices $\mathcal{I} = \SET{(c,d) \, : \, (c,d) \neq (1,\pm 1)}$.
This gives  
$$
\left|P_{k,m}(it) - M(t)\right| = o(R_M(t))
$$
uniformly in $t\in [A,B]$.
And so, the number of sign changes of $P_{k,m}(it)$ in $[A,B]$ is bounded below by the number of sign changes of $M(t)$ in this segment.
Denote
$$
\vartheta(t) = \frac{2\pi \alpha t^2  }{t^2 + 1} - \arctan{t}
$$
so that $\Theta_M(t) = k\vartheta(t) + o(k)$.
Denote by $V_A^B(\vartheta)$ the total variation of $\vartheta$ in $[A,B]$.
Then the number of sign changes of $\cos\left(\Theta_M(t)\right)$ in $[A,B]$ is given by
$$
\frac{k}{\pi} V_A^B(\vartheta) + o(k).
$$
It follows that the number of zeros of $P_{k,m}$ on $\mathcal{L}_i$, which we denote by $N_{\mathcal{L}_i}$, is at least $\frac{k}{\pi} V_A^B(\vartheta) + o(k)$.
Since $V_A^B(\vartheta) > 0$, we conclude that $N_{\mathcal{L}_i}\gg k$.
And so, a positive proportion of the zeros of $P_{k,m}$ in $\mathcal{F}$ are on $\mathcal{L}_i$.
\end{proof}

\section{Non-real zeros}\label{sec-5}
Other than the real zeros in \autoref{fig-zeros}, the non-real zeros also seem to be following a pattern.
In this section explain this phenomenon when $\alpha < \frac{1}{4\pi}$.

We remark that the method of proof can work for larger $\alpha$'s as well, and can also be applied to deduce the asymptotic density of zeros along the curve $\Gamma_\alpha$. 
We choose to restrict to the case $\alpha < \frac{1}{4\pi}$ as this simplifies many of the computations.

\subsection{Sketch of proof}
In the limit $m\sim\alpha k$, a consequence of \autoref{lem-main} is that $P_{k,m}(z)$ becomes well approximated by those $\sigma_{c,d}$'s for which $r_{c,d}$ is maximal.
In particular, for a compact set where only a single $r_{c,d}$ is maximal, we have the following:
\begin{lemma}\label{lem-nonzero_compact}
Let $\mathcal{C}\subset \mathcal{F}$ is a compact set. 
Assume that $r_{c_m,d_m}$ is maximal amongst all other $r_{c,d}$ on $\mathcal{C}$.
Then $P_{k,m}(z) = \sigma_{c_m,d_m}(z)(1 + o(1))$ uniformly in $\mathcal{C}$, and as a consequence $P_{k,m}(z) \neq 0$ in $\mathcal{C}$ for large enough $k$.
\end{lemma}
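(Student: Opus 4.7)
The statement is a direct corollary of \autoref{lem-main}, and my proposed plan has essentially three short steps. First, from the expansion \eqref{def-pkm} I would write
$$
P_{k,m}(z) - \sigma_{c_m,d_m}(z) \;=\; \sum_{(c,d) \in \mathcal{I}} \sigma_{c,d}(z), \qquad \mathcal{I} = \{(c,d) : (c,d) \neq (c_m,d_m)\},
$$
so that by the triangle inequality,
$$
\bigl| P_{k,m}(z) - \sigma_{c_m,d_m}(z) \bigr| \;\leq\; \sum_{(c,d) \in \mathcal{I}} \bigl|\sigma_{c,d}(z)\bigr|.
$$
Second, the hypothesis that $r_{c_m,d_m}$ is (strictly) maximal on $\mathcal{C}$ among all other $r_{c,d}$ is exactly what \autoref{lem-main} requires for this $\mathcal{I}$ and this maximal pair, so applying it gives
$$
\sum_{(c,d) \in \mathcal{I}} \bigl|\sigma_{c,d}(z)\bigr| \;=\; o\bigl(|\sigma_{c_m,d_m}(z)|\bigr)
$$
uniformly for $z \in \mathcal{C}$. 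Combining the two displays yields $|P_{k,m}(z) - \sigma_{c_m,d_m}(z)| = o(|\sigma_{c_m,d_m}(z)|)$ uniformly on $\mathcal{C}$.

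Third, to pass from this additive bound to the multiplicative form claimed, I would note that $\sigma_{c_m,d_m}(z)$ is nowhere vanishing on $\mathcal{F}$: the exponential factor in \eqref{def-sigmacd} is never zero, and $cz+d \neq 0$ whenever $\Im z > 0$ and $(c,d)\neq(0,0)$. Dividing by $|\sigma_{c_m,d_m}(z)|$ (which is positive and continuous on the compact set $\mathcal{C}$, hence bounded below) gives
$$
\left| \frac{P_{k,m}(z)}{\sigma_{c_m,d_m}(z)} - 1 \right| = o(1)
$$
uniformly on $\mathcal{C}$, which is exactly $P_{k,m}(z) = \sigma_{c_m,d_m}(z)(1+o(1))$. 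The nonvanishing conclusion is then immediate: for $k$ sufficiently large the factor $1+o(1)$ is bounded away from $0$, so $P_{k,m}$ cannot vanish on $\mathcal{C}$. There is essentially no obstacle in this argument, since all the real work was carried out in the proof of \autoref{lem-main}; this lemma is simply the packaging of that estimate together with the elementary observation that $\sigma_{c_m,d_m}$ has no zeros in the upper half-plane.
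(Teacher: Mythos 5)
Your proof is correct and is essentially identical to the paper's own (very terse) proof: apply \autoref{lem-main} with $\mathcal{I}$ the complement of $\{(c_m,d_m)\}$ to get $P_{k,m}(z)=\sigma_{c_m,d_m}(z)(1+o(1))$, then use the nonvanishing of $\sigma_{c_m,d_m}$ on $\HH$. You have merely spelled out the details the paper leaves implicit.
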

\begin{proof}
The fact that $P_{k,m}(z) = \sigma_{c_m,d_m}(z)(1 + o(1))$ follows from \autoref{lem-main}. 
the consequence $P_{k,m}(z) \neq 0$ then follows from $\sigma_{c_m,d_m}(z) \neq 0$.
\end{proof}

Thus, it is reasonable to guess that the zeros of $P_{k,m}$ in $\mathcal{F}$ will cluster around segments where $r_{c_1,d_1}(z) = r_{c_2,d_2}(z)$ for two different pairs $(c_1,d_1)$ and $(c_2,d_2)$, where both are maximal. 
Note that this does not immediately follow from \autoref{lem-nonzero_compact}, since the zeroes can still be 'very high up' near the cusp. Nor does it prove that every segment where two different terms are maximal at once must have zeros nearby.

We will however show that for $\alpha < \frac{1}{4\pi}$, the zeros do indeed cluster around the segments where two $r_{c,d}$'s are maximal at once. 
For $\alpha < \frac{1}{4\pi}$ there are four relevant $(c,d)$ pairs to consider: $\sigma_{0,1}$, $\sigma_{1,0}$, $\sigma_{1,1}$ and $\sigma_{1,-1}$.
The area where each is dominant is shown in \autoref{fig-max}.
\begin{figure}[ht]
\centering
\includegraphics[trim= 0 0 0 0 , width=1\textwidth]{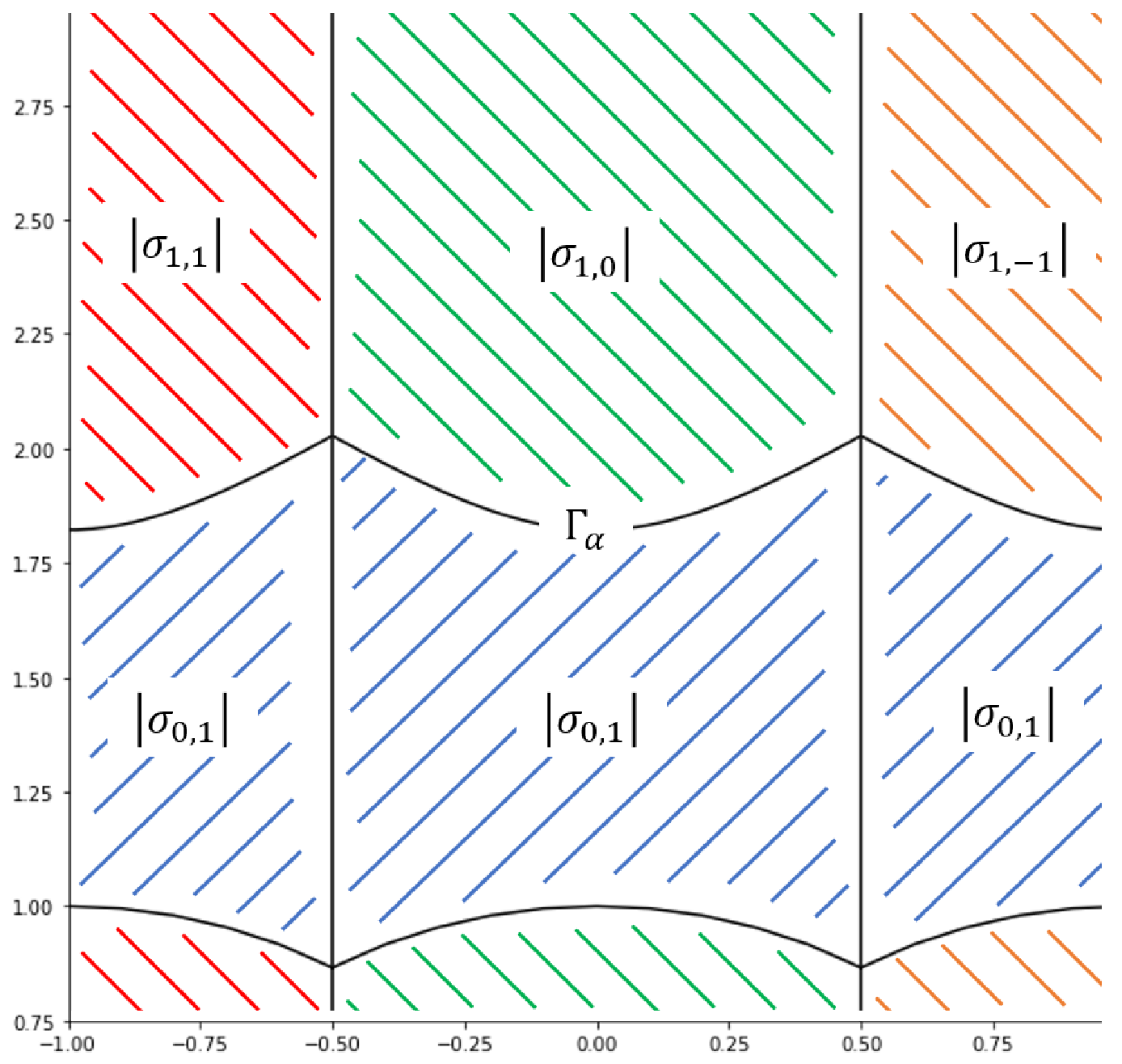}
\caption {Showing the dominant $|\sigma_{c,d}|$ for $P_{1200,90}$ in each area in $\mathcal{F}$ and its surrounding. 
}\label{fig-max}
\end{figure}
This matches with \autoref{fig-zeros} where the zeros of $P_{1200,90}$ seem to be on the lines where two different $|\sigma_{c,d}|$'s are maximal at once.
The curve $\Gamma_\alpha$ defined in the statement of \autoref{thm-non_real} is precisely the set of those $z\in\mathcal{F}\setminus\mathcal{A}$ for which $r_{1,0}(z) = r_{0,1}(z)$.
The condition $\alpha < \frac{1}{4\pi}$ ensures that $\Gamma_\alpha$ is connected and does not intersect $\mathcal{A}$.

Our method up until this point was to look at $P_{k,m}(z)$ along the relevant curve, and use the fact that this function was real and changed signs many times to deduce the existence of many zeros.
In this situation however, the function $P_{k,m}(z)$ restricted to $\Gamma_\alpha$ does not seem to be real in any obvious way.
In fact, we believe the zeros of $P_{k,m}$ are not on $\Gamma_\alpha$ (as was the case with $\mathcal{A}$, $\mathcal{L}_\rho$, $\mathcal{L}_i$), but only tend towards $\Gamma_\alpha$ as $k$ grows.
Thus, we must consider a different approach than in the previous cases.
We instead use Cauchy's argument principle, which will be able to capture zeroes in a neighborhood of $\Gamma_\alpha$.
That is, we will consider
\begin{equation}\label{eq-oint}
\frac{1}{2\pi i}\oint_\Upsilon \frac{P_{k,m}'(z)}{P_{k,m}(z)}    dz
\end{equation}
for an appropriate contour $\Upsilon$ covering most of $\Gamma_\alpha$.
The guiding idea is that $P_{k,m}$ behaves like $\sigma_{1,0}$ above $\Gamma_\alpha$, and behaves like $\sigma_{0,1}$ below $\Gamma_\alpha$.
Thus, we can hope that \eqref{eq-oint} will be large due to this discrepancy. 

\subsection{Proof of \autoref{thm-non_real}}

In order to apply Cauchy's argument principle, we will also need an analogue of \autoref{lem-main} for $P_{k,m}'$.

\begin{lemma}\label{lem-derivative}
Let $\mathcal{C}\subset \mathcal{F}$ be a compact set. 
Let $\mathcal{I}$ be a subset of the indices $(c,d)$.
Assume that for some $(c_m,d_m)\not \in \mathcal{I}$ we have that $r_{c_m,d_m}(z) > r_{c,d}(z)$ for all $(c,d)\in\mathcal{I}$ and all $z\in \mathcal{C}$.
Then 
$$
\sum_{(c,d)\in\mathcal{I}}\left|\sigma'_{c,d}(z)\right|
= o\left(\left|\sigma'_{c_m,d_m}(z)\right|\right)
$$
with implied constants depending on $c_m,d_m$.
\end{lemma}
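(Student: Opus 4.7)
The plan is to imitate the proof of \autoref{lem-main}, exploiting the fact that differentiating $\sigma_{c,d}$ only multiplies it by a factor of size $O(k)$, which is swallowed by the exponential gap between $r_{c_m,d_m}$ and the other $r_{c,d}$'s provided by the maximality hypothesis.

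First I would differentiate directly. Since $(\gamma z)' = (cz+d)^{-2}$, a routine computation gives
\begin{equation*}
\sigma'_{c,d}(z) = \sigma_{c,d}(z)\,h_{c,d}(z),\qquad h_{c,d}(z) = \frac{2\pi i m}{(cz+d)^2} - \frac{kc}{cz+d}.
\end{equation*}
Then I would bound $h_{c,d}$ uniformly on $\mathcal{C}$. Using the elementary inequality $|cz+d|^2 \geq c^2 y^2 \geq 3c^2/4$ valid for $c\geq 1$ and $z\in\mathcal{F}$, together with $m = O(k)$, both summands are $O(k)$ uniformly in $(c,d)$; the remaining case $(c,d)=(0,1)$ gives $h_{0,1}(z) = 2\pi i m = O(k)$ trivially. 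Hence $|\sigma'_{c,d}(z)| \leq Ck\,|\sigma_{c,d}(z)|$ for some $C$ depending on $\mathcal{C}$.

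Feeding this into \autoref{lem-main},
\begin{equation*}
\sum_{(c,d)\in\mathcal{I}} |\sigma'_{c,d}(z)| \;\leq\; Ck \sum_{(c,d)\in\mathcal{I}} |\sigma_{c,d}(z)| \;=\; o\!\left(|\sigma_{c_m,d_m}(z)|\right)
\end{equation*}
uniformly on $\mathcal{C}$, since the exponential factor $(r_{c,d}/r_{c_m,d_m})^k<1$ extracted in \autoref{lem-main} easily absorbs the polynomial factor $k$. To convert the right-hand side into $o\!\left(|\sigma'_{c_m,d_m}(z)|\right) = o\!\left(|\sigma_{c_m,d_m}(z)||h_{c_m,d_m}(z)|\right)$, I need a matching lower bound $|h_{c_m,d_m}(z)| \gg k$.

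The main obstacle is precisely this lower bound, because $h_{c_m,d_m}$ can vanish. Writing $m = \alpha k + o(k)$ and pulling out $k$,
\begin{equation*}
h_{c_m,d_m}(z) = \frac{k}{c_m z + d_m}\!\left(\frac{2\pi i \alpha}{c_m z + d_m} - c_m\right) + o(k),
\end{equation*}
so the leading coefficient is a nonconstant meromorphic function of $z$ vanishing only at the isolated point $z_0 = 2\pi i \alpha/c_m^2 - d_m/c_m$ when $c_m\geq 1$, and is the nonzero constant $2\pi i \alpha$ when $(c_m,d_m)=(0,1)$. Since the implied constants are allowed to depend on $c_m,d_m$ and on $\mathcal{C}$, I would simply assume $\mathcal{C}$ avoids $z_0$; continuity and compactness then yield $|h_{c_m,d_m}(z)| \geq c' k$ for some $c'>0$ and all sufficiently large $k$, closing the estimate. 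In the subsequent application to the argument principle this causes no real trouble, as $z_0$ is a single isolated point and the contour around $\Gamma_\alpha$ can be perturbed to avoid it at negligible cost.
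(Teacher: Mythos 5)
Your proof follows essentially the same route as the paper's: factor $\sigma'_{c,d}=\sigma_{c,d}\,h_{c,d}$, bound $|h_{c,d}|=O(k)$ uniformly in $(c,d)$ on $\mathcal{C}$ using $|cz+d|\gg \max(1,c)$, and reduce to \autoref{lem-main} after supplying the matching lower bound $|h_{c_m,d_m}|\gg k$. Your handling of that lower bound is in fact more careful than the paper's, which simply asserts $|\sigma'_{c_m,d_m}(z)|\gg k\,|\sigma_{c_m,d_m}(z)|$ (from a displayed derivative formula that moreover drops the factors $2\pi i m$ and $c$) without noting that $h_{c_m,d_m}$ can vanish at the isolated point near $2\pi i\alpha/c_m^2-d_m/c_m$; your observation that one must either exclude this point from $\mathcal{C}$ or check it is harmless --- and it is harmless in the paper's application, since for $\alpha<\frac{1}{4\pi}$ the only relevant pairs are $(0,1)$, where $h_{0,1}=2\pi i m$ never vanishes, and $(1,0)$, whose exceptional point $\approx 2\pi i\alpha$ has imaginary part below $\tfrac{\sqrt{3}}{2}$ and so lies outside $\mathcal{F}$ --- is a genuine, if minor, repair of the paper's argument.
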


\begin{proof}
From \eqref{def-sigmacd} we have that
$$
\sigma_{c,d}'(z) = 
- \frac{\sigma_{c,d}(z)}{cz + d}
\left(
k - \frac{1}{cz + d}
\right).
$$
It follows then that 
$$
\left|\sigma_{c,d}'(z)\right|
\ll 
k \left|\sigma_{c,d}(z)\right|
$$
and that
\begin{equation*}
\left|\sigma_{c_m,d_m}'(z)\right|
\gg
k  \left|\sigma_{c_m,d_m}(z)\right|
\end{equation*}
with implied constants depending on $c_m,d_m$.
From \autoref{lem-main} we get
$$
\sum_{(c,d)\in\mathcal{I}}
\left|\sigma_{c,d}(z)\right| = o(\left|\sigma_{c_m,d_m}(z)\right|).
$$
Thus, we have
\begin{equation*}
\sum_{(c,d)\in\mathcal{I}}\left|\sigma_{c,d}'(z)\right|
\ll \sum_{(c,d)\in\mathcal{I}}
\left|k\sigma_{c,d}(z)\right|
= o\left(k\left|\sigma_{c_m,d_m}(z)\right|\right)
= o\left(\left|\sigma_{c_m,d_m}'(z)\right|\right) 
\end{equation*}
with implied constants depending on $c_m,d_m$.
\end{proof}

\begin{corollary}\label{cor-derivative}
Let $\mathcal{C}\subset \mathcal{F}$ be a compact set.
Assume that $r_{c_m,d_m}$ is maximal amongst all other $r_{c,d}$ on $\mathcal{C}$.
Then $P'_{k,m}(z) = \sigma'_{c_m,d_m}(z)(1 + o(1))$ uniformly in $\mathcal{C}$.
\end{corollary}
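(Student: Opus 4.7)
The plan is very short, since this corollary is essentially a one-line consequence of \autoref{lem-derivative}. The strategy is: differentiate the Poincar\'e series termwise, isolate the dominant contribution $\sigma'_{c_m,d_m}$, and bound the remainder using \autoref{lem-derivative} applied to the complementary index set.

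More precisely, I would first note that the series defining $P_{k,m}$ converges uniformly on compact subsets of $\HH$ (each term is holomorphic, and the general convergence theory of Poincar\'e series gives this). Hence $P_{k,m}$ is holomorphic and its derivative can be computed termwise:
\[
P'_{k,m}(z) = \sigma'_{0,1}(z) + \sum_{\substack{c,d\in\ZZ \\ c>0 \\ \gcd(c,d) = 1}} \sigma'_{c,d}(z).
\]
Splitting off the distinguished index $(c_m, d_m)$ gives
\[
P'_{k,m}(z) - \sigma'_{c_m,d_m}(z) = \sum_{(c,d)\in\mathcal{I}} \sigma'_{c,d}(z),
\]
where $\mathcal{I}$ is the set of all admissible indices other than $(c_m,d_m)$.

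Now I would invoke \autoref{lem-derivative} with this choice of $\mathcal{I}$ and the same $(c_m,d_m)$. The hypothesis is satisfied by assumption, so the lemma yields
\[
\left|\sum_{(c,d)\in\mathcal{I}} \sigma'_{c,d}(z)\right|
\leq
\sum_{(c,d)\in\mathcal{I}} \left|\sigma'_{c,d}(z)\right|
= o\!\left(\left|\sigma'_{c_m,d_m}(z)\right|\right)
\]
uniformly on $\mathcal{C}$ (the uniformity is built into \autoref{lem-main}, which underlies \autoref{lem-derivative}, and the implied constants are allowed to depend on $(c_m,d_m)$, consistent with the statement). Dividing through by $\sigma'_{c_m,d_m}(z)$, which is nonvanishing on $\mathcal{C}$ for large $k$ (since $\sigma_{c_m,d_m}$ is nowhere zero and the logarithmic derivative factor $k - (c_m z + d_m)^{-1}$ is nonzero for $k$ large), gives $P'_{k,m}(z) = \sigma'_{c_m,d_m}(z)(1 + o(1))$ uniformly on $\mathcal{C}$, as desired.

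There is essentially no obstacle here: the only thing to check carefully is the termwise differentiation and the uniformity of the $o(1)$, both of which are immediate from standard properties of uniformly convergent holomorphic series and from the uniform estimate supplied by \autoref{lem-derivative}.
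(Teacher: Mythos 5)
Your proof is correct and follows exactly the paper's route: differentiate the series termwise and apply \autoref{lem-derivative} with $\mathcal{I}$ equal to all indices other than $(c_m,d_m)$. The extra remarks on termwise differentiation and the nonvanishing of $\sigma'_{c_m,d_m}$ are fine (and slightly more careful than the paper, which leaves them implicit).
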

\begin{proof}
We have that
$$
P_{k,m}'(z) = \sum_{(c,d)} \sigma_{c,d}'(z).
$$
The result then follows from \autoref{lem-derivative} with $\mathcal{I} = \SET{(c,d)\neq(c_m,d_m)}$.
\end{proof}

\begin{proof}[Proof of \autoref{thm-non_real}]
Fix some $\varepsilon > 0$ sufficiently small. 
Throughout the proof, the notation $\BigO{\varepsilon}$ will be meant as $\varepsilon \cdot \BigO{1}$ with $\BigO{1}$ not depending on $\varepsilon$.
All other implied constants throughout the proof can depend on $\varepsilon$.

We say that a point $x + yi$ is $\varepsilon$ below $\Gamma_\alpha$ if $x + (y+\delta)i\in\Gamma_\alpha$ for some $\delta \geq \varepsilon$.
We similarly define a point to be $\varepsilon$ above $\Gamma_\alpha$.

We denote 
\begin{equation*}
\widetilde{r_{c,d}}(z) = \frac{\exp{-2\pi\frac{m}{k} \frac{y}{\left| cz + d\right|^2}}}{\left|cz + d\right|},
\end{equation*} 
\begin{equation*}
\widetilde{\theta_{c,d}}(z)
= 
2\pi \frac{m}{k} \frac{ac |z| + (ad + bc)x + bd}{\left| cz + d\right|^2}
+ \arg{cz+d},
\end{equation*}
and 
\begin{equation*}
\theta_{c,d}(z)
= 
2\pi \alpha \frac{ac |z| + (ad + bc)x + bd}{\left| cz + d\right|^2}
+ \arg{cz+d}.
\end{equation*}
With these notations we have $\sigma_{c,d}(z) = \left(\widetilde{r_{c,d}}(z) e^{i \widetilde{\theta_{c,d}}(z)}\right)^k$.

Unlike $r_{c,d}(z)$, $\theta_{c,d}(z)$, the functions $\widetilde{r_{c,d}}(z), \widetilde{\theta_{c,d}}(z)$, depend on $k$.
However, we note that we have
$$
\widetilde{r_{c,d}}(z) = r_{c,d}(z)(1 + o(1)),\quad
\widetilde{\theta_{c,d}}(z) = \theta_{c,d}(z)(1 + o(1))
$$
on any compact subset $\mathcal{C}\subset \mathcal{F}$.

Specifically, we will need
$$
\widetilde{\theta_{0,1}}(z) = 2\pi  \frac{m}{k} x,\quad 
\widetilde{\theta_{1,0}}(z) = -2\pi  \frac{m}{k} \frac{x}{|z|^2} + \arg{z},
$$
and
$$
\widetilde{r_{0,1}}(z) =\exp{-2\pi \frac{m}{k}y}
,\quad 
\widetilde{r_{1,0}}(z) = \frac{\exp{-2\pi \frac{m}{k}\frac{y}{|z|^2}}}{|z|}.
$$

Consider the contour $\Upsilon$ depicted in \autoref{fig-contour} which we now describe.
\begin{figure}[ht]
\centering
\includegraphics[trim=0 0 0 0, width=1\textwidth]{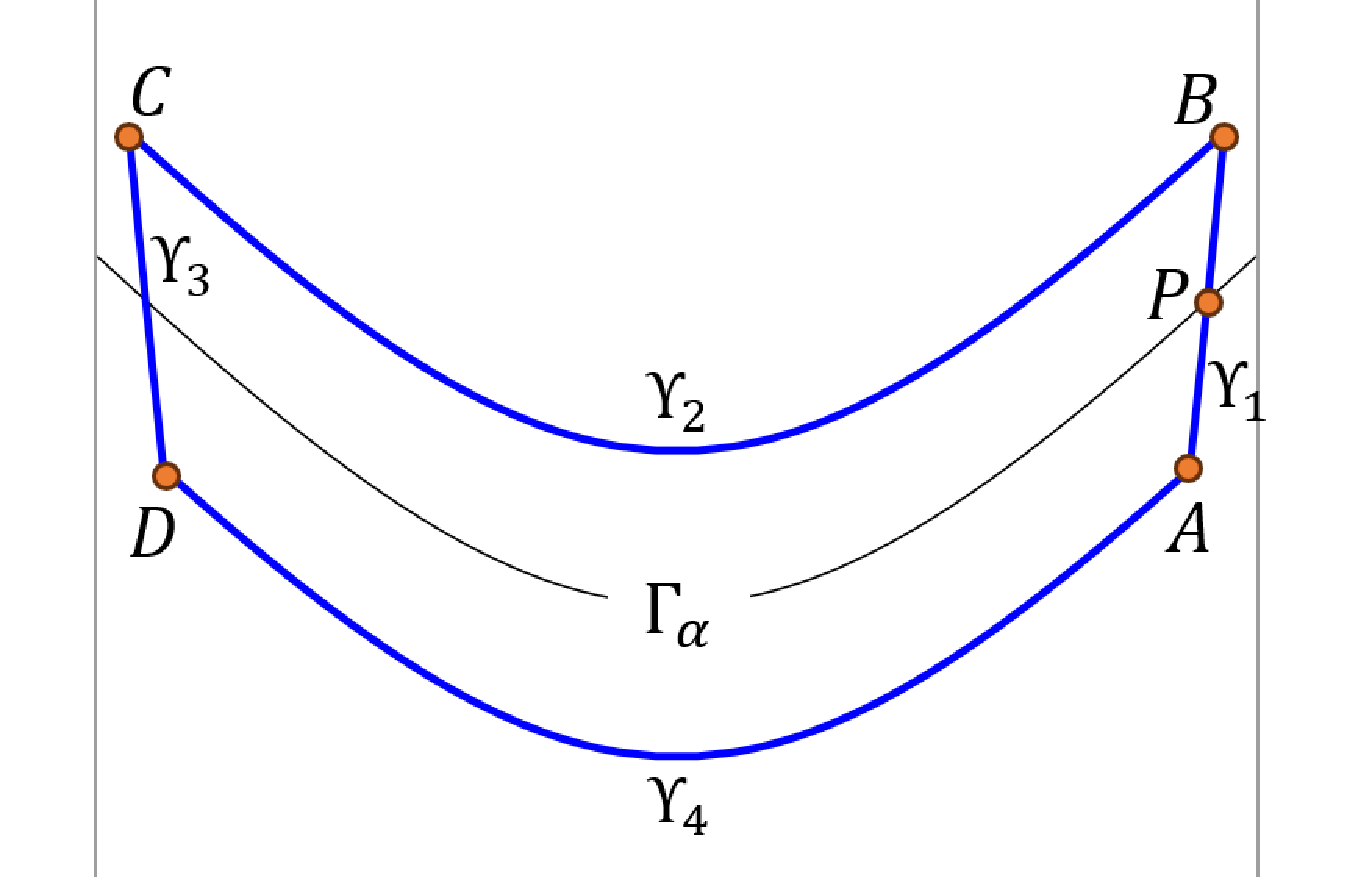}
\caption {The contour $\Upsilon$. 
}\label{fig-contour}
\end{figure}
The curve $\Gamma_\alpha$ intersects $\mathcal{L}_\rho$ at $\frac{1}{2} + F^{-1}(\alpha) i$ where $F$ was defined in \autoref{prop-r01<rm}.
We pick a point $P$ on $\Gamma_\alpha$ whose distance $d$ from $\mathcal{L}_\rho$ satisfies $\varepsilon \ll d \ll \varepsilon$, so that we can write
$$
P = \left(\frac{1}{2} + \BigO{\varepsilon}\right)
 + \left(F^{-1}(\alpha) + \BigO{\varepsilon}\right)i.
$$
We further adjust $P$ such that at $P$ we will have 
\begin{equation}\label{eq-theta_diff_cond}
\widetilde{\theta_{0,1}}(P) - \widetilde{\theta_{1,0}}(P) = v
\in \frac{\pi}{k}\ZZ.
\end{equation}
Note that we have 
$$
\widetilde{\theta_{0,1}}(z) - \widetilde{\theta_{1,0}}(z)
=
\left(\theta_{0,1}(z) - \theta_{1,0}(z)\right)(1 + o(1))
$$
so that the function $\widetilde{\theta_{0,1}}(z) - \widetilde{\theta_{1,0}}(z)$ does not vary much with $k$.
Furthermore, the function $\widetilde{\theta_{0,1}}(z) - \widetilde{\theta_{1,0}}(z)$ is not constant on any sub-segment of $\Gamma_\alpha$ with positive length (for this would imply that $\sigma_{0,1}(z) = \sigma_{1,0}(z)e^{iC}$ on this segment for some constant $C\in\RR$, which would imply that these functions are equal on all of $\HH$, which is false).
Thus, for $k$ large, the collection of points $P$ on $\Gamma_\alpha$ satisfying \eqref{eq-theta_diff_cond} becomes dense, which means that we can modify our original choice of $P$ by $o(1)$ so that \eqref{eq-theta_diff_cond} will be satisfied as well.

The curve $\Upsilon_1$ is given by the condition 
$$
\widetilde{\theta_{0,1}}(z) - \widetilde{\theta_{1,0}}(z) = v,
$$
which passes through $P$.
The endpoints $A$, $B$ are chosen in the following way.
We require that the point $A$ is $\varepsilon$ below $\Gamma_\alpha$, and we also require that the distance of $A$ from the boundary of $\mathcal{F}$ is at least $\varepsilon$.
Similarly, we require that $B$ is $\varepsilon$ above $\Gamma_\alpha$ and distance at least $\varepsilon$ from the boundary of $\mathcal{F}$.
We also make sure that $\text{Length}\left(\Upsilon_1\right) = \BigO{\varepsilon}$.

The curve $\Upsilon_3$ is then the reflection of $\Upsilon_1$ about the imaginary axis.

The curve $\Upsilon_2$ connects $B$ to $C$, and we make sure that $\Upsilon_2$ is always $\varepsilon$ above $\Gamma_\alpha$ and distance at least $\varepsilon$ from $\partial \mathcal{F}$.

Similarly, the curve $\Upsilon_4$ connects $D$ to $A$ and we make sure that $\Upsilon_4$ is always $\varepsilon$ below $\Gamma_\alpha$ and distance at least $\varepsilon$ from $\partial \mathcal{F}$.
This is possible for $\alpha < \frac{1}{4\pi}$ and $\varepsilon$ sufficiently small, since in this case $\Gamma_\alpha$ is entirely above $\mathcal{A}$.

We denote $\text{Int}(\Upsilon)$ the interior of $\Upsilon$, and by $\overline{\text{Int}(\Upsilon)}$ its closure.
Denote also $\mathcal{C}_+$ the set of points in $\overline{\text{Int}(\Upsilon)}$ which are $\varepsilon$ above $\Gamma_\alpha$, and $\mathcal{C}_-$ the set of points in $\overline{\text{Int}(\Upsilon)}$ which are $\varepsilon$ below $\Gamma_\alpha$.
From \autoref{lem-nonzero_compact} and \autoref{cor-derivative} we see that on $\mathcal{C}_-$ we have
\begin{equation}\label{eq-on_c-}
P_{k,m}(z) = \sigma_{0,1}(z) (1 + o(1)),\quad
P_{k,m}'(z) = \sigma_{0,1}'(z) (1 + o(1)),
\end{equation}
and on $\mathcal{C}_+$ we have
\begin{equation}\label{eq-on_c+}
P_{k,m}(z) = \sigma_{1,0}(z) (1 + o(1)),\quad
P_{k,m}'(z) = \sigma_{1,0}'(z) (1 + o(1)).
\end{equation}

We now compute \eqref{eq-oint} by calculating the integral over the various segments in \autoref{fig-contour}.
We begin by considering $\Upsilon_1$.
On $\Upsilon_1$, either $r_{0,1}$ or $r_{1,0}$ (or both) are larger than all other $r_{c,d}$'s.
Thus, from \autoref{lem-main} we have that on $\Upsilon_1$
$$
\left|P_{k,m}(z) - \sigma_{0,1}(z) -\sigma_{1,0}(z)\right| = o\left(\left| \sigma_{0,1}(z) \right| + \left| \sigma_{1,0}(z) \right|\right).
$$
From the definition of $\Upsilon_1$, we have that on this curve $\sigma_{0,1}(z)$ and $\sigma_{1,0}(z)$ have the same argument, so that
$$
\left| \sigma_{0,1}(z) \right| + \left| \sigma_{1,0}(z) \right| = 
\left| \sigma_{0,1}(z) + \sigma_{1,0}(z) \right|.
$$
It follows that on $\Upsilon_1$ we have
$$
P_{k,m}(z) = \left(\sigma_{0,1}(z) +\sigma_{1,0}(z)\right)(1+o(1)).
$$
From \autoref{lem-derivative} we also get that on  $\Upsilon_1$:
$$
\left|P_{k,m}'(z) - \sigma_{0,1}'(z) -\sigma_{1,0}'(z)\right| = o\left(\left| \sigma_{0,1}'(z) \right| + \left| \sigma_{1,0}'(z) \right|\right)
$$
so that
$$
\left|P_{k,m}'(z)\right| \ll \left| \sigma'_{0,1}(z) \right| + \left| \sigma'_{1,0}(z) \right|.
$$
And so we have that on $\Upsilon_1$:
\begin{equation*}
\left|\frac{P'_{k,m}(z)}{P_{k,m}(z)}\right|
\ll
\frac{\left| \sigma'_{0,1}(z) \right| + \left| \sigma'_{1,0}(z) \right|}{\left| \sigma_{0,1}(z) \right| + \left| \sigma_{1,0}(z) \right|}
\ll 
\left|\frac{\sigma'_{0,1}(z)}{\sigma_{0,1}(z)}\right|
+
\left|\frac{\sigma'_{1,0}(z)}{\sigma_{1,0}(z)}\right|
\ll k.
\end{equation*}
Since $\text{Length}\left(\Upsilon_1\right) = \BigO{\varepsilon}$, we conclude that
\begin{equation}\label{eq-upsilon1}
\int_{\Upsilon_1}
\frac{P_{k,m}'(z)}{P_{k,m}(z)}dz
= \BigO{\varepsilon} k.
\end{equation}
From symmetry we also conclude
\begin{equation}\label{eq-upsilon3}
\int_{\Upsilon_3}
\frac{P_{k,m}'(z)}{P_{k,m}(z)}dz
= \BigO{\varepsilon} k.
\end{equation}

We now consider the curve $\Upsilon_4$.
From \eqref{eq-on_c-} we have that on this curve
\begin{multline*}
\frac{P_{k,m}'(z)}{P_{k,m}(z)}
= 
(1 + o(1))\frac{\sigma'_{0,1}(z)}{\sigma_{0,1}(z)}
= \\
(1+o(1)) k \frac{d}{dz}\left[\log\left(\widetilde{r_{0,1}}(z)e^{i\widetilde{\theta_{0,1}}(z)}\right)\right] 
=\\
(k + o(k))\frac{d}{dz}\left[\log\left(\exp{2\pi i \frac{m}{k}z}\right)\right] 
= \\
(k + o(k))\frac{d}{dz}\left[ 2\pi i \alpha z\right].
\end{multline*}
Noting that
\begin{gather*}
D = \left(-\frac{1}{2} + \BigO{\varepsilon}\right) +  \left(F^{-1}(\alpha) + \BigO{\varepsilon}\right)i \\
A = \left(\frac{1}{2} + \BigO{\varepsilon}\right) +  \left(F^{-1}(\alpha) + \BigO{\varepsilon}\right)i
\end{gather*}
it follows that
\begin{equation}\label{eq-upsilon4}
\int_{\Upsilon_4} \frac{P_{k,m}'(z)}{P_{k,m}(z)}
= 
(k + o(k)) 2 \pi i \alpha z\big|_{D}^{A} 
= 
k \left(2 \pi i \alpha + \BigO{\varepsilon} + o(1)\right).
\end{equation}

We now consider the curve $\Upsilon_2$.
From \eqref{eq-on_c+} we have that on this curve
\begin{multline*}
\frac{P_{k,m}'(z)}{P_{k,m}(z)}
= 
(1 + o(1))\frac{\sigma'_{1,0}(z)}{\sigma_{1,0}(z)}
= \\
(1+o(1))k \frac{d}{dz}\left[\log\left(\widetilde{r_{1,0}}(z)e^{i\widetilde{\theta_{1,0}}(z)}\right)\right] 
=\\
(k + o(k))\frac{d}{dz}\left[\log\left(\frac{\exp{2\pi i \frac{m}{k}\cdot\frac{-1}{z}}}{z}\right)\right] 
= \\
(k + o(k))\frac{d}{dz}\left[ 2\pi i \alpha \frac{-1}{z} - \log(|z|) - i \arg{z}\right].
\end{multline*}
Noting that
\begin{gather*}
B = \left(\frac{1}{2} + \BigO{\varepsilon}\right) +  \left(F^{-1}(\alpha) + \BigO{\varepsilon}\right)i \\
C = \left(-\frac{1}{2} + \BigO{\varepsilon}\right) +  \left(F^{-1}(\alpha) + \BigO{\varepsilon}\right)i
\end{gather*}
it follows that
\begin{multline}\label{eq-upsilon2}
\int_{\Upsilon_2} \frac{P_{k,m}'(z)}{P_{k,m}(z)}
= 
(k + o(k)) \left.\left[ 2\pi i \alpha \frac{-1}{z} - \log(|z|) - i \arg{z}\right] \right|_{B}^{C} 
= \\
(k + o(k))
\left(
2\pi i\alpha
\frac{1}{\frac{1}{4} + F^{-1}(\alpha)^2}
+ 2i\arctan{2F^{-1}(\alpha)}
-\pi i + \BigO{\varepsilon}
\right).
\end{multline}

Combining \eqref{eq-upsilon1}, \eqref{eq-upsilon2}, \eqref{eq-upsilon3}, \eqref{eq-upsilon4},
we get
\begin{multline*}
\frac{1}{2\pi i}\oint_\Upsilon \frac{P_{k,m}'(z)}{P_{k,m}(z)}    dz
=\\
(k+o(k))\left(
\alpha
+ \frac{\alpha}{\frac{1}{4} + F^{-1}(\alpha)^2}
+ \frac{1}{\pi} \arctan{2 F^{-1}(\alpha)} - \frac{1}{2}
+ \BigO{\varepsilon}
\right).
\end{multline*}
By the argument principle, this is the number of zeros in the interior of $\Upsilon$.
However, from \autoref{lem-nonzero_compact} we can conclude that all of these zeros are $\varepsilon$ near to $\Gamma_\alpha$, since $P_{k,m}$ has no zeros in $\mathcal{C}_+$, $\mathcal{C}_-$ for large enough $k$.

It remains to show that this accounts for almost all zeros not on $\mathcal{L}_\rho$ or $\mathcal{A}$.
Denote $N_{\mathcal{L}_\rho}$, $N_{\mathcal{A}}$ the number of zeros of $P_{k,m}$ on $\mathcal{L}_\rho$ and on $\mathcal{A}$ respectively.
Denote also $N_{\Gamma_\alpha, \varepsilon}$ the number of non-real zeros of $P_{k,m}$ which are $\varepsilon$ near $\Gamma_\alpha$.
In our range of $\alpha < \frac{1}{4\pi}$, we have from \autoref{thm-lrho} that
$$
N_{\mathcal{L}_\rho} \gtrsim
k\left(\frac{1}{2} - \frac{ \alpha}{\frac{1}{4} + F^{-1}(\alpha)^2} - \frac{1}{\pi}\arctan{2F^{-1}(\alpha)}\right).
$$
Rankin's result in \cite{MR0664646} gives us 
$$
N_{\mathcal{A}} \gtrsim k\left(\frac{1}{12} - \alpha\right),
$$
and we have just shown
$$
N_{\Gamma_\alpha, \varepsilon}
\gtrsim
k\left(
\alpha
+ \frac{\alpha}{\frac{1}{4} + F^{-1}(\alpha)^2}
+ \frac{1}{\pi} \arctan{2 F^{-1}(\alpha)} - \frac{1}{2}
+ \BigO{\varepsilon}\right).
$$

Summing the lower bounds above for $N_{\mathcal{L}_\rho}$, $N_{\mathcal{A}}$, $N_{\Gamma_\alpha, \varepsilon}$, we get that these account for at least $(k+o(k))\left(\frac{1}{12} + \BigO{\varepsilon}\right)$ zeros.
Since we can take $\varepsilon$ arbitrarily small (and for each $\varepsilon$ take $k$ sufficiently large), it follows that this accounts for $100\%$ of the zeros of $P_{k,m}$.

\end{proof}


\bibliographystyle{plain}
\bibliography{my_bib}

\end{document}